\numberwithin{equation}{section}
\theoremstyle{plain}
\newtheorem{thm}{Theorem}
\newtheorem{prop}[thm]{Proposition}
\newtheorem{assu}{Assumption}
\newtheorem{defn}{Definition}[section]
\theoremstyle{definition}
\theoremstyle{remark}
\newcommand{\transpose}{^{\operatorname{T}}}
\title{A novel control method for solving high-dimensional Hamiltonian systems through deep neural networks}
\author[1]{Shaolin Ji}
\author[2]{Shige Peng}
\author[1]{Ying Peng}
\author[2]{Xichuan Zhang}
\affil[1]{Zhongtai Securities Institute for Financial Studies, Shandong University, 250100, China}
\affil[2]{School of Mathematics, Shandong University, 250100, China}
\begin{document}
\maketitle

\begin{abstract}

In this paper, we mainly focus on solving high-dimensional stochastic Hamiltonian systems with boundary condition,
which is essentially a Forward Backward Stochastic Differential Equation (FBSDE in short), and propose a novel method from the view of the stochastic control. In order to obtain the approximated solution of the Hamiltonian system,
we first introduce a corresponding stochastic optimal control problem such that the extended Hamiltonian system of the control problem is exactly what we need to solve, then we develop two different algorithms suitable for different cases of the control problem and approximate the stochastic control via deep neural networks. From the numerical results, comparing with the Deep FBSDE method developed previously from the view of solving FBSDEs, the novel algorithms converge faster, which means that they require fewer training steps, and demonstrate more stable convergences for different Hamiltonian systems.

\textbf{Keywords} stochastic Hamiltonian system, FBSDE, optimal control, PDE
\end{abstract}

\section{Introduction}\label{sec:intro}

The theory of the Hamiltonian system is known as one of the dominant tools for the description of dynamic phenomenons in the field of physics and economics~\cite{energy2001}.
For example, in physics, the mechanical and electrical systems are usually represented as energy functions, which are at the same time Hamiltonian systems.
Actually, the Hamiltonian system could reflect the laws of energy conservation and dissipation~\cite{energy2001,Ortega2002}.

A determined Hamiltonian system can be given as
\begin{equation}\label{eq-Det-Hsys}
    \begin{cases}
        \mathrm{d} x_t = H_y(x_t,y_t)\mathrm{d} t , \vspace{1ex} \\
         \mathrm{d} y_t = H_x(x_t,y_t)\mathrm{d} t ,
    \end{cases}
\end{equation}
where $H(x,y):\mathbb{R}^n \times \mathbb{R}^n \rightarrow \mathbb{R}$ is a given real function called the Hamiltonian,
$H_x(\cdot)$ and $H_y(\cdot)$ are partial derivatives of $H(\cdot)$ with respect to $x$ and $y$, respectively.
When considering a terminal condition $y_T = \Phi_x(x_T)$ for a given function $\Phi:\mathbb{R}^n \rightarrow \mathbb{R}$,
\eqref{eq-Det-Hsys} becomes a boundary problem.

For more complex environments where the physical system can not be represented with deterministic form, the Hamiltonian system is usually combined with a stochastic process.  Here we consider a boundary problem of stochastic Hamiltonian system, as shown in the following,
\begin{equation}\label{eq:Ham_sys_sec1}
    \begin{cases}
        \mathrm{d} x_t = H_y(t,x_t,y_t,z_t) \mathrm{d} t + H_z(t,x_t,y_t,z_t) \mathrm{d} B_t, \vspace{1ex} \\
        -\mathrm{d} y_t = H_x(t,x_t,y_t,z_t) \mathrm{d} t - z_t \mathrm{d} B_t, \vspace{1ex} \\
        x_0 = a, \qquad y_T = -\Phi_x(x_T),
    \end{cases}
\end{equation}
which is essentially a fully coupled forward-backward stochastic differential equation (FBSDE in short).
Many research work have studied the solutions of FBSDEs and the eigenvalue of the Hamiltonian systems~\cite{hu1995solution,wu1999Fully,Peng2000eigenvalues,Hu2000FBSDE,ma_solving_1994,ma1993solvability,ma2002approximate,karoui_backward_1997,pardoux_forward-backward_1999}.
The significance of studying this kind of Hamiltonian system is that on the one hand it can be applied in solving the stochastic optimal control problems via the well-known stochastic maximum principle~\cite{bensoussan1982lectures,Yong_stochastic_control}; on the other hand, it helps to obtain the solutions of nonlinear partial differential equations (PDEs in short) according to the connection between the FBSDEs and the PDEs ~\cite{pardoux_forward-backward_1999}.

In most cases, it is difficult to obtain the explicit solution of the Hamiltonian system \eqref{eq:Ham_sys_sec1},  thus numerical methods should be studied. As  \eqref{eq:Ham_sys_sec1} is essentially a FBSDE, an intuitive way is to solve \eqref{eq:Ham_sys_sec1} from the perspective of FBSDEs.
Therefore, numerical methods for solving the FBSDEs can be applied ~\cite{numericalPDE2012review,regression_for_BSDEs,quantization_for_BSDEs,FBSDE_Ma2008,time_discretize_FBSDE_Zhang,Zhao2016Multistep,weinan2017on,Milstein2004Numerical,Yu2016Efficient,huijskens2016efficient,ma1993solvability}
, such as the PDE methods, the probabilistic methods, etc.
However, most of the traditional numerical methods can not deal with high-dimensional problems.
Moreover, it is worth to point out that solving the fully coupled FBSDEs numerically has been a much more challenging problem than the general FBSDEs,
even for low dimensional cases.
Recently, with the application of the deep learning technique in a wide range of areas,
numerical methods based on deep neural networks have been proposed to solve high-dimensional Backward Stochastic Differential Equations (BSDEs in short) and FBSDEs and achieved remarkable success.
Among them, a breakthrough work was developed by ~\cite{WeinanDLforBSDE,WeinanDLforHigh_dim},
the main idea is to reformulate the BSDE into a stochastic optimal control problem by rewriting the backward process into a forward form and taking the terminal error as the cost functional, then the solution of the BSDE is approximated by deep neural network. Other different deep learning algorithms are proposed to solve the BSDEs and related PDEs~\cite{hure_deep_2020,pham_neural_2021,raissi2018forward-backward,wang2018deep}, where they also focus on approximating the solution of the BSDE (or PDE) with the deep neural network. For solving coupled and fully coupled FBSDEs, \cite{deeplearning_FBSDE,Peng_FBSDE_numerical} developed numerical algorithms which are also inspired by ~\cite{WeinanDLforBSDE,WeinanDLforHigh_dim}.

In this paper, we propose a novel method to solve the Hamiltonian system \eqref{eq:Ham_sys_sec1} via deep learning.
As equation \eqref{eq:Ham_sys_sec1} is at the same time a fully coupled FBSDE, this method is also suitable for solving fully coupled FBSDEs. However, different from the above mentioned deep learning methods which aim to solve the FBSDEs directly,
we first look for the corresponding stochastic optimal control problem of the Hamiltonian system,
such that the extended Hamiltonian system of the stochastic control problem is exactly what we need to solve.
Then we approximate the optimal control with deep neural networks.
In order to solve the optimal control problem,
two different cases are considered which correspond to two different algorithms.
The first algorithm (Algorithm \ref{alg:main}) deals with the case where the function $f(t,x,u,v)$ defined in \eqref{eq:f_defn_two_linear} has an explicit form.
For the case that $f(t,x,u,v)$ cannot be expressed explicitly,
the original control problem is transformed to a double objective optimization problem and we develop the second algorithm (Algorithm \ref{alg:double}) to solve it.
Finally, the numerical solutions $(y_t,z_t)$ of \eqref{eq:Ham_sys_sec1} are obtained by calculating the solution of the extended Hamiltonian system for the optimal control according to the stochastic maximum principle.

We also compare the results of our novel proposed algorithms with that of the algorithm developed in our previous work \cite{Peng_FBSDE_numerical} ( called the Deep FBSDE method here),
which can be used to solve the Hamiltonian system from the view of the FBSDEs.
Comparing with the Deep FBSDE method,
the novel algorithms have two advantages.
The first advantage is that less numbers of iteration steps are required to achieve convergent results.
When the Deep FBSDE method converges,
it needs more iterations to achieve a convergent result.
The second advantage is that our proposed algorithms have more stable convergences.
For some Hamiltonian systems,
the Deep FBSDE method is easier to diverge with the same piecewise decay learning rate as our novel proposed algorithms.
The details can be referred to the numerical results in \autoref{sec:num_results}.

This paper is organized as follows.
In \autoref{sec-form}, we describe the Hamiltonian system that we aim to solve, and introduce its corresponding stochastic optimal control problem.
In \autoref{sec:nemrical_method},
we introduce two schemes to solve the stochastic control problem according to whether the function $f(t,x,u,v)$ defined in \eqref{eq:f_defn_two_linear} has an explicit form,
and then give the corresponding neural network architectures.
The numerical results for different examples are shown in \autoref{sec:num_results},
and a brief conclusion is made in \autoref{sec-con}.

\section{Problem formulation}\label{sec-form}

In this section, we first introduce the stochastic Hamiltonian system that we aim to solve, then we show that solving this kind of stochastic Hamiltonian systems is equivalent to solving a stochastic optimal control problem.

\subsection{The stochastic Hamiltonian system}\label{ssec-2-1}

Let $T>0$,
$ (\Omega,\mathcal{F},\mathbb{F},\mathbb{P}) $ be a filtered probability space,
in which $ B:[0,T] \times \Omega \rightarrow \mathbb{R}^d $ is a $ d $-dimensional standard $ \mathbb{F} $-Brownian motion;
$ \mathbb{F}=\{\mathcal{F}_{t}\}_{0\leq t\leq T} $ is the natural filtration generated by the Brownian motion $ B $
Suppose that $ (\Omega,\mathcal{F},\mathbb{P}) $ is complete,
$ \mathcal{F}_{0} $ contains all the $ \mathbb{P} $-null sets in $ \mathcal{F} $ and $ \mathbb{F} $ is right continuous.

For $z^1, z^2\in \mathbb{R}^{n\times d}$,
define $\left\langle z^1, z^2 \right\rangle = \mbox{tr} (z^1(z^2)\transpose{})$ and $|z|^2 = \left\langle z, z \right\rangle$.
The space of all mean square-integrable $\mathcal{F}_t$-adapted and $\mathbb{R}^n$-valued processes will be denoted by $M^2(0,T;\mathbb{R}^n)$, which is a Hilbert space with the norm
\begin{equation*}
    \|v(\cdot)\| = \Big( \mathbb{E} \big[ \int_0^T |v(t)|^2 d t \big] \Big)^{1/2}
\end{equation*}
and
\begin{equation*}
L^2(\Omega, \mathcal{F}_t, \mathbb{P}) \triangleq \left\{ \xi|\xi \in \mathbb{R}^n \mbox{ is } \mathcal{F}_t\mbox{-measurable and } \mathbb{E}\left[|\xi|^2\right] <\infty\right\}.
\end{equation*}
Let
\begin{equation}\label{eq:H_function}
H: [0,T] \times \mathbb{R}^n \times \mathbb{R}^n \times \mathbb{R}^{n \times d} \rightarrow \mathbb{R},
\end{equation}
be a $C^2$ real function of $(x,y,z)$,
called a Hamiltonian and let
\begin{equation}\label{eq:phi_function}
\Phi: \mathbb{R}^n \rightarrow \mathbb{R},
\end{equation}
be a $C^1$ real function of $x$.
In our context, unless otherwise stated, we always assume that the Hamiltonian $H$ is strictly convex with respect to $y$ and $z$.

Consider the following stochastic Hamiltonian system:
\begin{equation}\label{eq:Ham_sys}
    \begin{cases}
        \mathrm{d} x_t = H_y(t,x_t,y_t,z_t) \mathrm{d} t + H_z(t,x_t,y_t,z_t) \mathrm{d} B_t, \vspace{1ex} \\
        -\mathrm{d} y_t = H_x(t,x_t,y_t,z_t) \mathrm{d} t - z_t \mathrm{d} B_t, \vspace{1ex} \\
        x_0 = a, \qquad y_T = -\Phi_x(x_T),
    \end{cases}
\end{equation}
where $H_x$, $H_y$, $H_z$ are derivatives of $H$ with respect to $x$, $y$, $z$, respectively. And the above system is essentially a special kind of fully coupled FBSDEs.

Set
\begin{equation*}
    w =
    \begin{pmatrix}
        x\\
        y\\
        z
    \end{pmatrix} \in \mathbb{R}^n \times \mathbb{R}^n \times \mathbb{R}^{n\times d}, \qquad A(t,w) =
    \begin{pmatrix}
        -H_x \\
        H_y\\
        H_z
    \end{pmatrix}(t,w),
\end{equation*}
and
\begin{equation*}
    \left\langle w^1, w^2 \right\rangle = \left\langle x^1, x^2 \right\rangle + \left\langle y^1, y^2 \right\rangle + \left\langle z^1, z^2 \right\rangle.
\end{equation*}

\begin{defn}
A triple of process $(x(\cdot), y(\cdot), z(\cdot)): [0,T] \times \Omega \rightarrow \mathbb{R}^n \times \mathbb{R}^n \times \mathbb{R}^{n\times d}$ is called an adapted solution of \eqref{eq:Ham_sys},
if $(x(\cdot), y(\cdot), z(\cdot)) \in M^2(0,T;\mathbb{R}^n \times \mathbb{R}^n \times \mathbb{R}^{n\times d})$, and it satisfies \eqref{eq:Ham_sys}.
\end{defn}

\begin{assu}\label{assu:1}
For any $w, w' \in \mathbb{R}^n \times \mathbb{R}^n \times \mathbb{R}^{n\times d}$ and $ x, x' \in \mathbb{R}^n $,
\begin{itemize}
  \item[(i)] there exists a constant $\beta_1>0$, such that
        \begin{equation*}
        \begin{array}{c}
            |A(t,w)-A(t,w')| \leq \beta_1|w-w'|, \vspace{1ex} \\
        \end{array}
        \end{equation*}
        and
        \begin{equation*}
            |\Phi_x(x)-\Phi_x(x')| \leq \beta_1|x-x'|.
        \end{equation*}
  \item[(ii)] there exists a constant $\beta_2>0$,
    such that the following monotonic conditions hold.
        \begin{align*}
            \langle A(t,w) - A(t,w'), w-w' \rangle & \leq -\beta_2 |w-w'|^2\\
            \langle -\Phi_x(x)+\Phi_x(x'), x-x' \rangle & \geq \beta_2 |x-x'|^2.
        \end{align*}
\end{itemize}
\end{assu}

As equation \eqref{eq:Ham_sys} is at the same time a fully coupled FBSDE, we recall the following existence and uniqueness theorem in \cite{hu1995solution, wu1999Fully}.

\begin{thm}[Theorem 3.1 in \cite{hu1995solution}]\label{thm:exist_unique_thm}
    Let Assumption \ref{assu:1} hold.
    Then there exists a unique adapted solution $(x(\cdot),y(\cdot),z(\cdot))$ for \eqref{eq:Ham_sys}.
\end{thm}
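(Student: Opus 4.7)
The plan is to apply the method of continuation, following the standard Hu--Peng approach for fully coupled FBSDEs under monotonicity. I would introduce a family of FBSDEs parameterized by $\alpha \in [0,1]$ interpolating between a trivially solvable system at $\alpha=0$ and the target system \eqref{eq:Ham_sys} at $\alpha=1$. Specifically, for each $\alpha$ and each inhomogeneous data $(\phi,\psi,\gamma)\in M^2$ and $\xi \in L^2(\Omega,\mathcal{F}_T)$, consider the FBSDE whose generator is $\alpha A(t,w)+(1-\alpha)\cdot(\text{trivial linear part, e.g.\ pairing }x\leftrightarrow -y)$, with terminal data $-\alpha\Phi_x(x_T)+(1-\alpha)x_T+\xi$ and forcing $(\phi,\psi,\gamma)$. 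At $\alpha=0$ the forward and backward equations decouple, so unique solvability follows from the classical SDE and linear BSDE theory.

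The key step is the continuation lemma: there exists $\delta_0>0$, depending only on $\beta_1,\beta_2,T$ and in particular independent of $\alpha_0\in[0,1-\delta_0]$, such that unique solvability at $\alpha=\alpha_0$ implies unique solvability at $\alpha=\alpha_0+\delta$ for every $\delta\in[0,\delta_0]$. Given a candidate triple $(X,Y,Z)\in M^2$, define $\Theta(X,Y,Z)$ to be the unique solution at parameter $\alpha_0$ of the FBSDE whose inhomogeneous term is augmented by $\delta$ times the difference between $A(t,X,Y,Z)$ and the trivial linear part evaluated at $(X,Y,Z)$, and whose terminal datum is augmented by $-\delta(\Phi_x(X_T)-X_T)$. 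A fixed point of $\Theta$ is precisely a solution at $\alpha_0+\delta$, so it suffices to show that $\Theta$ is a contraction on $M^2$ once $\delta$ is small enough.

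To establish the contraction estimate, take two inputs $(X^i,Y^i,Z^i)$ with images $(x^i,y^i,z^i)$, set $\hat{w}=w^1-w^2$ and similarly for the inputs, and apply It\^o's formula to $\langle \hat{x}_t,\hat{y}_t\rangle$ on $[0,T]$. Using the two FBSDEs satisfied by $(x^i,y^i,z^i)$ and the monotonicity condition (ii) of Assumption~\ref{assu:1}, the drift produced by It\^o's formula is bounded above by $-\alpha_0\beta_2\mathbb{E}\int_0^T|\hat{w}|^2\,dt$ plus a term of order $\delta\,\beta_1\,\|\hat{W}\|_{M^2}\|\hat{w}\|_{M^2}$ coming from the inhomogeneous forcing; the terminal contribution is bounded similarly using the monotonicity of $-\Phi_x$ and the Lipschitz bound in (i). Solving this quadratic inequality yields $\|\Theta(\hat{W})\|_{M^2}\le C\delta\|\hat{W}\|_{M^2}$ for a universal constant $C$ depending only on $\beta_1,\beta_2,T$, so choosing $\delta_0=1/(2C)$ makes $\Theta$ a contraction on $M^2$, producing a unique fixed point.

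Finally, starting from $\alpha=0$ and iterating the continuation step $\lceil 1/\delta_0\rceil$ times reaches $\alpha=1$, yielding unique solvability of \eqref{eq:Ham_sys}. The hard part will be extracting the a priori contraction constant $C$ independently of $\alpha_0$: the computation hinges on the cancellation between the off-diagonal couplings that appears when pairing the forward and backward equations via $\langle \hat{x},\hat{y}\rangle$, so that the monotonicity constant $\beta_2$ provides a genuinely negative-definite quadratic form in $\hat{w}$ that absorbs the Lipschitz bound $\beta_1$ uniformly in $\alpha_0$.
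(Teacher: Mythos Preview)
The paper does not give its own proof of this statement; it simply quotes it as Theorem~3.1 of \cite{hu1995solution} (see also \cite{wu1999Fully}) and moves on. Your proposed proof via the method of continuation is exactly the technique used in those references, so in that sense your approach matches the source the paper relies on.

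One small point to tighten in your sketch: as written, the a priori bound from It\^o's formula applied to $\langle \hat{x}_t,\hat{y}_t\rangle$ yields a coercive term of order $-\alpha_0\beta_2\,\mathbb{E}\int_0^T|\hat{w}|^2\,dt$, which degenerates at $\alpha_0=0$ and would not give a uniform contraction constant. The standard fix (and what Hu--Peng actually do) is to choose the ``trivial linear part'' so that it too is $\beta_2$-monotone---for instance, taking the $\alpha=0$ system to have coefficients $(-\beta_2 y,\beta_2 x,\beta_2 z)$ in place of $A$. Then the interpolated generator has monotonicity constant $\alpha_0\beta_2+(1-\alpha_0)\beta_2=\beta_2$ uniformly in $\alpha_0$, and your contraction argument goes through with $C$ depending only on $\beta_1,\beta_2,T$. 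With that adjustment the plan is correct.
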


Recently, numerical algorithms for solving the BSDEs and FBSDEs with deep learning method ~\cite{WeinanDLforBSDE,WeinanDLforHigh_dim,deeplearning_FBSDE,Peng_FBSDE_numerical} have been proposed and demonstrated remarkable performance. The main idea is to reformulate the BSDE into a stochastic optimal control problem, where the solution $z$ of the BSDE is regarded as a control and approximated with deep neural network, and the terminal error is taken as the cost functional. Other different numerical algorithms have also been developed for solving the FBSDEs and the related PDEs \cite{hure_deep_2020,pham_neural_2021,raissi2018forward-backward,wang2018deep}, which also regard the solution of the FBSDE ($y$ or $z$) as a control and approximate it with appropriate loss function.


\subsection{A novel method to solve the stochastic Hamiltonian system}\label{ssec-2-2}

As noted in the previous sections, the stochastic Hamiltonian system \eqref{eq:Ham_sys} is essentially a fully coupled FBSDE and can be solved through the methods for solving the FBSDEs.
In this paper, we develop a novel method for solving the Hamiltonian system\eqref{eq:Ham_sys}. Different from the above mentioned algorithms for solving the BSDEs or FBSDEs, our main idea is to find the corresponding stochastic optimal control problem of the stochastic Hamiltonian system, and then directly apply the deep learning method to solve the control problem.

$ \forall x,y,u \in \mathbb{R}^n, z,v\in\mathbb{R}^{n\times d} $, set
\begin{equation}\label{eq-F-def}
    F(t,x,u,v,y,z) = \langle y, u\rangle + \langle z,v\rangle - H(t,x,y,z)
\end{equation}
and
\begin{equation}\label{eq:f_defn_two_linear}
    f(t,x,u,v) = \max_{y,z} F(t,x,u,v,y,z).
\end{equation}
Here $f(t,x,u,v)$ is the Legendre-Fenchel transform of $H(t,x,y,z)$ with respect to $(y,z)$. Due to the differentiability and strict convexity of the Hamiltonian $H$,
$f$ is also differential and strictly convex with respect to $u$ and $v$ \cite{convex1970}.

Consider the following control system,
\begin{equation}\label{eq:sys_two_net_linear}
\begin{cases}
    \mathrm{d} x_t = u_t \mathrm{d} t + v_t \mathrm{d} B_t, \vspace{1ex} \\
    x_0 = a,
\end{cases}
\end{equation}
and the cost functional is given as
\begin{equation}\label{eq:cost-two-net}
    J(u(\cdot),v(\cdot)) = \mathbb{E} \left[ \int_0^T f(t,x_t,u_t,v_t) \mathrm{d} t + \Phi(x_T)\right],
\end{equation}
where the controls $ u(\cdot) $ and $ v(\cdot) $ belong to $M^2(0,T;\mathbb{R}^n)$ and $M^2(0,T;\mathbb{R}^{n\times d})$, respectively. The set of all admissible controls is denoted by $ \mathcal{U}_{ad}[0,T] $.
Any $(u^*(\cdot),v^*(\cdot)) \in \mathcal{U}_{ad}[0,T]$ satisfying
\begin{equation}\label{eq:optimal_control_u_bar}
    J(u^*(\cdot),v^*(\cdot)) = \inf_{(u(\cdot),v(\cdot))\in\mathcal{U}_{ad}[0,T]} J(u(\cdot),v(\cdot))
\end{equation}
is called an \textit{optimal control}.
The corresponding state trajectory $x^*(\cdot)$ is called an \textit{optimal trajectory} and the corresponding triple $(x^*(\cdot),u^*(\cdot),v^*(\cdot))$ is called an \textit{optimal triple}.

In the following we prove that solving the stochastic Hamiltonian system \eqref{eq:Ham_sys} is equivalent to solving the stochastic optimal control problem \eqref{eq:sys_two_net_linear}-\eqref{eq:cost-two-net}.

We need the following assumption.

\begin{assu}\label{assu:3}
        $f(t,x,u,v)$ is continuously differentiable with respect to $x$, $u$, $v$, and
        \begin{equation*}
            \begin{aligned}
                |f_x(t,x,u,v)| &\leq C (|x|+|u|+|v|+1), \vspace{1ex} \\
                |f_u(t,x,u,v)| &\leq C (|x|+|u|+|v|+1), \vspace{1ex} \\
                |f_v(t,x,u,v)| &\leq C (|x|+|u|+|v|+1), \vspace{1ex} \\
                |f(t,0,0,0)| &\leq C
            \end{aligned}
        \end{equation*}
        for some given $C>0$.
\end{assu}

\begin{thm}\label{thm-main}
    Let $H$ be a given $C^2$ real function and strictly convex with respect to $y$ and $z$.
    The derivatives of $H$ and $\Phi$ satisfy Assumption \ref{assu:1}; $f$ satisfies Assumption \ref{assu:3}.
    Suppose that  $(x^*(\cdot),u^*(\cdot),v^*(\cdot))$ is the optimal triple of the optimal control problem \eqref{eq:sys_two_net_linear}-\eqref{eq:cost-two-net}.
    Then $(x^*(\cdot),y^*(\cdot),z^*(\cdot))$ uniquely solves the Hamiltonian system \eqref{eq:Ham_sys},
    where $y^*(\cdot)$ can be given as
    \begin{equation}\label{eq-yzF-solution-1}
    \begin{aligned}
        y_t^* & = \mathbb{E} \left[\int_t^T -f_x(s,x_s^*,u^*_s,v^*_s) \mathrm{d} s - \Phi_x(x^*_T) \Big | \mathcal{F}_t \right],
    \end{aligned}
    \end{equation}
    and $z^*(\cdot)$ can be solved by this following BSDE
    \begin{equation}\label{eq-yzF-solution}
            \left\{
            \begin{array}{l}
            -\mathrm{d} y_t^* = -f_x(t,x^*_t,u^*_t,v^*_t) \mathrm{d} t - z_t^*\mathrm{d} B_t, \vspace{1ex}\\
            y_T^*=-\Phi_x(x^*_T), \qquad t\in[0,T].
            \end{array}
            \right.
    \end{equation}
\end{thm}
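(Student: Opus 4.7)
The plan is to treat the Hamiltonian system \eqref{eq:Ham_sys} as the first-order necessary optimality system for the stochastic control problem \eqref{eq:sys_two_net_linear}-\eqref{eq:cost-two-net}. Concretely, I would apply the stochastic Pontryagin maximum principle to the optimal triple $(x^*,u^*,v^*)$, then identify the resulting adjoint process with $(y^*,z^*)$ via the Legendre-Fenchel duality between $H$ and $f$.

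First, introduce the control-problem Hamiltonian
\[
\mathcal{H}(t,x,u,v,p,q) = \langle p,u\rangle + \langle q,v\rangle - f(t,x,u,v).
\]
Since the admissible controls take values in the convex space $\mathbb{R}^n\times\mathbb{R}^{n\times d}$ and enter the state linearly with no $x$-dependence in the drift/diffusion, the second-order adjoint drops out, and Assumption \ref{assu:3} supplies the linear growth and smoothness needed to invoke the standard convex-control maximum principle. This produces an adjoint pair $(p,q)\in M^2$ solving the BSDE
\[
-\mathrm{d} p_t = -f_x(t,x_t^*,u_t^*,v_t^*)\,\mathrm{d} t - q_t\,\mathrm{d} B_t, \qquad p_T = -\Phi_x(x_T^*),
\]
together with the pointwise maximum condition $\mathcal{H}(t,x_t^*,u_t^*,v_t^*,p_t,q_t)=\sup_{u,v}\mathcal{H}(t,x_t^*,u,v,p_t,q_t)$ for almost every $(t,\omega)$.

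Second, exploit the duality. Because $H$ is $C^2$ and strictly convex in $(y,z)$, its conjugate $f$ defined by \eqref{eq:f_defn_two_linear} is strictly convex and differentiable in $(u,v)$, and the biconjugation identity
\[
H(t,x,y,z) = \sup_{u,v}\bigl[\langle y,u\rangle + \langle z,v\rangle - f(t,x,u,v)\bigr]
\]
holds, with the dual first-order relations $y=f_u(t,x,u,v)$ and $z=f_v(t,x,u,v)$ equivalent to $u=H_y(t,x,y,z)$ and $v=H_z(t,x,y,z)$. Setting $y_t^*:=p_t$ and $z_t^*:=q_t$, the maximum condition translates into $u_t^*=H_y(t,x_t^*,y_t^*,z_t^*)$ and $v_t^*=H_z(t,x_t^*,y_t^*,z_t^*)$. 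An envelope-theorem argument applied to the Legendre representation $f(t,x,u,v)=\langle y,u\rangle + \langle z,v\rangle - H(t,x,y,z)$ at the maximizing $(y,z)=(y^*,z^*)$ then yields $f_x(t,x_t^*,u_t^*,v_t^*)=-H_x(t,x_t^*,y_t^*,z_t^*)$.

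Substituting these four identities into the state equation \eqref{eq:sys_two_net_linear} and the adjoint BSDE rewrites them as precisely the stochastic Hamiltonian system \eqref{eq:Ham_sys}, with $x_0^*=a$ and $y_T^*=-\Phi_x(x_T^*)$. Uniqueness is immediate from Theorem \ref{thm:exist_unique_thm}, and the representation \eqref{eq-yzF-solution-1} follows by integrating \eqref{eq-yzF-solution} on $[t,T]$ and taking $\mathcal{F}_t$-conditional expectations. The main obstacle I anticipate is the careful accounting of the dual smoothness chain: verifying that $f$ inherits enough $x$-regularity from $H$ to make the adjoint BSDE well-posed under Assumption \ref{assu:3} and to justify the envelope identity $f_x=-H_x$ rigorously at the dual optimizer. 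This should follow from the strict convexity of $H$ in $(y,z)$ together with the implicit function theorem applied to the invertible gradient map $(y,z)\mapsto(H_y,H_z)$, which lets one trace the $x$-dependence through the supremum in \eqref{eq:f_defn_two_linear}.
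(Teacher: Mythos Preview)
Your proposal is correct and follows essentially the same route as the paper: apply the stochastic maximum principle to obtain the adjoint BSDE and the pointwise maximum condition on $\mathcal{H}$ (the paper's $h$), then use Legendre--Fenchel duality and the implicit function theorem to convert the first-order conditions into $u^*=H_y$, $v^*=H_z$, $f_x=-H_x$, so that the adjoint system becomes \eqref{eq:Ham_sys}, with uniqueness supplied by Assumption~\ref{assu:1}. The anticipated obstacle you flag---justifying the envelope identity $f_x=-H_x$ via the smooth invertibility of $(y,z)\mapsto(H_y,H_z)$---is exactly how the paper handles it as well.
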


\begin{proof}
     Set
    \begin{equation}\label{eq:h_defn_two_linear}
        h(t,x,u,v,y,z) = \langle y, u \rangle + \langle z, v \rangle - f(t,x,u,v).
    \end{equation}

    Under our assumptions,
    for any optimal triple $(x^*(\cdot),u^*(\cdot),v^*(\cdot))$ of the optimal control problem \eqref{eq:sys_two_net_linear}-\eqref{eq:cost-two-net},
    we have the following extended stochastic Hamiltonian system through the well-known stochastic maximum principle (SMP in short) ( e.g. Theorem 4.1 in \cite{bensoussan1982lectures}),
    \begin{equation}\label{eq-ext-Hsys}
    \begin{cases}
    \mathrm{d} x_t^* = u_t^* \mathrm{d} t + v_t^* \mathrm{d} B_t, \vspace{1ex} \\
    -\mathrm{d} y_t^* = h_x(t,x^*_t,u^*_t,v^*_t,y^*_t,z^*_t) \mathrm{d} t - z_t^*\mathrm{d} B_t, \vspace{1ex}\\
    x_0^* = a, \qquad y_T^*=-\Phi_x(x^*_T), \qquad t\in[0,T],
    \end{cases}
    \end{equation}
    and
    \begin{equation}\label{eq:max_condition_h}
        h(t,x^*_t,u^*_t,v^*_t,y^*_t,z^*_t) = \max_{u\in\mathbb{R}^n, \\ v\in\mathbb{R}^{n\times d}} h(t,x^*_t,u,v,y^*_t,z^*_t), \ a.e.\ t\in[0,T].
    \end{equation}
    The solution of the extended stochastic Hamiltonian system \eqref{eq-ext-Hsys}-\eqref{eq:max_condition_h} is a 5-tuple $(x^*(\cdot),u^*(\cdot),v^*(\cdot),y^*(\cdot),z^*(\cdot))$.

    By Theorem 12.2 in \cite{convex1970},
    we have the inverse Legendre-Fenchel transform of \eqref{eq:f_defn_two_linear}:
    \begin{equation}\label{eq-H-uv}
        H(t,x,y,z) = \max_{u,v} h(t,x,u,v,y,z).
    \end{equation}
    Because $h(t,x,u,v,y,z)$ is strictly concave in $u,v$, it yields that the maximum point $(u^*_t,v^*_t)$ of \eqref{eq:max_condition_h} is uniquely determined by $(x^*_t,y^*_t,z^*_t)$ due to the implicit function existence theorem:
    \begin{equation}
        \begin{aligned}
        u^*_t &= \bar{u}(t,x^*_t,y^*_t,z^*_t), \\
        v^*_t &= \bar{v}(t,x^*_t,y^*_t,z^*_t),
        \end{aligned}
    \end{equation}
   and $\bar{u}, \bar{v}$ are differentiable functions.
    By \eqref{eq-H-uv},
    \begin{equation}
        \begin{aligned}
            H(t,x^*_t,y^*_t,z^*_t) &= h(t,x^*_t,\bar{u}(t,x^*_t,y^*_t,z^*_t),\bar{v}(t,x^*_t,y^*_t,z^*_t),y^*_t,z^*_t),
        \end{aligned}
    \end{equation}
    which leads to
    \begin{equation}
    \begin{aligned}
        &f(t,x^*_t,\bar{u}(t,x^*_t,y^*_t,z^*_t),\bar{v}(t,x^*_t,y^*_t,z^*_t)) \\
        = &\langle y^*_t, \bar{u}(t,x^*_t,y^*_t,z^*_t) \rangle + \langle z^*_t, \bar{v}(t,x^*_t,y^*_t,z^*_t) \rangle - H(t,x^*_t,y^*_t,z^*_t).
    \end{aligned}
    \end{equation}
    Thus, the derivatives of $f(t,x,u,v)$ with respect to $u,v$ are
    \begin{equation}
        \begin{aligned}
        f_u(t,x^*_t,\bar{u}(t,x^*_t,y^*_t,z^*_t),\bar{v}(t,x^*_t,y^*_t,z^*_t)) = y^*_t, \\
        f_v(t,x^*_t,\bar{u}(t,x^*_t,y^*_t,z^*_t),\bar{v}(t,x^*_t,y^*_t,z^*_t)) = z^*_t.
        \end{aligned}
    \end{equation}
   It can be verified that
    \begin{equation}
        \begin{aligned}
            H_x(t,x^*_t,y^*_t,z^*_t) =& -f_x(t,x^*_t,\bar{u}(t,x^*_t,y^*_t,z^*_t),\bar{v}(t,x^*_t,y^*_t,z^*_t))\\
            =& -f_x(t,x^*_t,u^*_t,v^*_t)\\
            = &h_x(t,x^*_t,u^*_t,v^*_t,y^*_t,z^*_t),
        \end{aligned}
    \end{equation}
    which implies that $(y^*(\cdot),z^*(\cdot))$ solves the BSDE \eqref{eq-yzF-solution}.
    Taking the conditional expectation in the backward equation of \eqref{eq-yzF-solution},
    we have \eqref{eq-yzF-solution-1} hold.

    Similarly, we have
    \begin{equation}
        \begin{aligned}
            H_y(t,x^*_t,y^*_t,z^*_t) = \bar{u}(t,x^*_t,y^*_t,z^*_t)=u^*_t, \\
            H_z(t,x^*_t,y^*_t,z^*_t) = \bar{v}(t,x^*_t,y^*_t,z^*_t)=v^*_t.
        \end{aligned}
    \end{equation}
    Thus, the extended stochastic Hamiltonian system \eqref{eq-ext-Hsys} is just the Hamiltonian system \eqref{eq:Ham_sys} and
     $(x^*(\cdot),y^*(\cdot),z^*(\cdot))$ solves \eqref{eq:Ham_sys}.  Because $H$ satisfies the monotonicity condition in Assumption \ref{assu:1}, the uniqueness of the solution is proved.
\end{proof}

The following proposition can help us to construct our algorithms in the next section.
\begin{prop}\label{cor-1}
    Under the same assumptions as in Theorem \ref{thm-main}, we have
    \begin{equation}\label{eq-J-solution-F}
    \begin{aligned}
        J(u^*(\cdot), v^*(\cdot)) &= \mathbb{E} \left[\int_0^T f(t,x^*_t,u^*_t,v^*_t) \mathrm{d}t + \Phi(x_T^*) \right]\\
        &= \mathbb{E} \left[\int_0^T F(t,x^*_t,u^*_t,v^*_t,y^*_t,z^*_t) dt + \Phi(x_T^*) \right],
    \end{aligned}
    \end{equation}
    and $(y^*(\cdot),z^*(\cdot))$ of \eqref{eq:Ham_sys} can also be obtained by solving the following BSDE:
    \begin{equation}\label{eq-yzF-solution-F}
    \left\{
    \begin{array}{l}
    -\mathrm{d} y_t^* = -F_x(t,x^*_t,u^*_t,v^*_t,y^*_t,z^*_t) \mathrm{d} t - z_t^*\mathrm{d} B_t, \vspace{1ex}\\
    y_T^*=-\Phi_x(x^*_T), \qquad t\in[0,T],
    \end{array}
    \right.
    \end{equation}
    where
    \begin{equation}
        F(t,x^*_t,u^*_t,v^*_t,y^*_t,z^*_t) = \max_{y,z} F(t,x^*_t,u^*_t,v^*_t,y,z).
    \end{equation}
    Then $y^*(\cdot)$ can be expressed as
    \begin{equation}\label{eq-y_F}
        \begin{aligned}
            y_t^* & = \mathbb{E} \left[\int_t^T -F_x(s,x_s^*,u^*_s,v^*_s,y^*_s,z^*_s) \mathrm{d} s - \Phi_x(x^*_T) \Big | \mathcal{F}_t \right].
        \end{aligned}
    \end{equation}
\end{prop}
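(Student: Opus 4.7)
The plan is to leverage the duality between $H$ and $f$ together with the Hamiltonian equations already established in Theorem \ref{thm-main}, in order to identify $(y^*(\cdot),z^*(\cdot))$ as the maximizer inside the Legendre--Fenchel transform and to rewrite the driver of the backward equation using $F$ instead of $f$.

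First I would verify that, for a.e.\ $t$, $(y^*_t,z^*_t)$ attains the supremum in \eqref{eq:f_defn_two_linear} evaluated at $(x^*_t,u^*_t,v^*_t)$. By Theorem \ref{thm-main} the triple $(x^*,y^*,z^*)$ solves the Hamiltonian system \eqref{eq:Ham_sys}, and comparing that system with the control dynamics \eqref{eq:sys_two_net_linear} gives $u^*_t = H_y(t,x^*_t,y^*_t,z^*_t)$ and $v^*_t = H_z(t,x^*_t,y^*_t,z^*_t)$. Since $F(t,x,u,v,\cdot,\cdot) = \langle y,u\rangle+\langle z,v\rangle - H(t,x,\cdot,\cdot)$ is strictly concave in $(y,z)$ (because $H$ is strictly convex in $(y,z)$), the first-order conditions $u - H_y = 0$ and $v - H_z = 0$ are necessary and sufficient for the maximum. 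Hence $(y^*_t,z^*_t)$ is the unique maximizer and
\begin{equation*}
f(t,x^*_t,u^*_t,v^*_t) = F(t,x^*_t,u^*_t,v^*_t,y^*_t,z^*_t) = \max_{y,z} F(t,x^*_t,u^*_t,v^*_t,y,z).
\end{equation*}
Plugging this identity into the definition \eqref{eq:cost-two-net} of $J$ yields the two expressions for $J(u^*,v^*)$ in \eqref{eq-J-solution-F}.

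Next, to identify the BSDE \eqref{eq-yzF-solution-F} with the one in \eqref{eq-yzF-solution}, I would apply the envelope theorem to the Legendre--Fenchel transform at the optimal point. Since $(y^*_t,z^*_t)$ maximizes $F(t,x^*_t,u^*_t,v^*_t,\cdot,\cdot)$ and $F$ depends on $x$ only through $-H$, differentiating the identity $f(t,x,u^*_t,v^*_t) = F(t,x,u^*_t,v^*_t,y(x),z(x))$ at $x=x^*_t$, where the first-order conditions in $(y,z)$ kill the implicit derivatives, gives
\begin{equation*}
f_x(t,x^*_t,u^*_t,v^*_t) = F_x(t,x^*_t,u^*_t,v^*_t,y^*_t,z^*_t) = -H_x(t,x^*_t,y^*_t,z^*_t).
\end{equation*}
Therefore the driver $-F_x$ in \eqref{eq-yzF-solution-F} coincides with the driver $-f_x$ in \eqref{eq-yzF-solution} (and with $H_x$ in \eqref{eq:Ham_sys}), while the terminal condition is the same. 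By the uniqueness part of Theorem \ref{thm:exist_unique_thm} (applied to the linear BSDE with fixed forward process $x^*$), $(y^*,z^*)$ solves \eqref{eq-yzF-solution-F}. Taking $\mathcal{F}_t$-conditional expectation of the integrated form of \eqref{eq-yzF-solution-F} — the stochastic integral $\int_t^T z^*_s\,dB_s$ being a martingale thanks to $z^*\in M^2(0,T;\mathbb{R}^{n\times d})$ — yields the representation \eqref{eq-y_F}.

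The only delicate step is the envelope identity $f_x = F_x$ at the optimum; Assumption \ref{assu:3} ensures $f$ is $C^1$ in $x$, and the strict concavity of $F$ in $(y,z)$ together with the implicit function theorem (already used in the proof of Theorem \ref{thm-main} to obtain differentiable selectors $\bar u,\bar v$, and here applied symmetrically to obtain $y^*(x),z^*(x)$) makes the interchange of $\partial_x$ and $\max_{y,z}$ rigorous. Everything else is either a direct substitution or an invocation of results already established.
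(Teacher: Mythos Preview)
Your proposal is correct and follows essentially the same route as the paper: identify $(y^*_t,z^*_t)$ as the maximizer of $F(t,x^*_t,u^*_t,v^*_t,\cdot,\cdot)$ via strict concavity, deduce $f=F$ along the optimum to get \eqref{eq-J-solution-F}, then use the envelope/implicit-function argument to obtain $F_x=-H_x$ and hence \eqref{eq-yzF-solution-F} and \eqref{eq-y_F}. The only cosmetic difference is that the paper reaches the maximizer identification by a short contradiction (comparing $F(\cdot,\bar y,\bar z)$ with $F(\cdot,y^*_t,z^*_t)$ after first showing $f=F$ at $(y^*_t,z^*_t)$ via the inverse Legendre transform), whereas you verify the first-order conditions $u^*_t=H_y$, $v^*_t=H_z$ directly; both are equivalent under the standing strict-convexity assumption on $H$.
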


\begin{proof}
    By the definition of $H$ and the SMP,
    \begin{equation}
    \begin{aligned}
        H(t,x^*_t,y^*_t,z^*_t) &= \max_{u,v} h(t,x^*_t,u,v,y^*_t,z^*_t), \\
        &=\langle y^*_t, u^*_t \rangle + \langle z^*_t, v^*_t \rangle - f(t,x^*_t,u^*_t,v^*_t).
    \end{aligned}
    \end{equation}
   Then, we have
    \begin{equation}
        f(t,x^*_t,u^*_t,v^*_t) = \langle y^*_t, u^*_t \rangle + \langle z^*_t, v^*_t \rangle - H(t,x^*_t,y^*_t,z^*_t).
    \end{equation}
    The definition of $f(t,x,u,v)$ shows that
    \begin{equation}\label{eq-yz-bar}
        \begin{aligned}
            f(t,x^*_t,u^*_t,v^*_t) &= \max_{y,z} F(t,x^*_t,u^*_t,v^*_t,y,z), \\
            &= \langle \bar{y}, u^*_t\rangle + \langle \bar{z},v^*_t\rangle - H(t,x^*_t,\bar{y},\bar{z}),
        \end{aligned}
    \end{equation}
    where $(\bar{y},\bar{z})$ is the minimum point.

    Notice that $F(t,x,u,v,y,z)$ is strictly concave with respect to $y,z$. If $(y^*_t,z^*_t) \neq (\bar{y},\bar{z})$,
    then
    \begin{equation}
        \begin{aligned}
            F(t,x^*_t,u^*_t,v^*_t,\bar{y},\bar{z}) & > F(t,x^*_t,u^*_t,v^*_t,y^*_t,z^*_t) \\
            & = \langle y^*_t, u^*_t \rangle + \langle z^*_t, v^*_t \rangle - H(t,x^*_t,y^*_t,z^*_t) \\
            & = f(t,x^*_t,u^*_t,v^*_t),
        \end{aligned}
    \end{equation}
    which contradicts the formula \eqref{eq-yz-bar}. So we have $(y^*_t,z^*_t) = (\bar{y},\bar{z})$
    and \eqref{eq-J-solution-F} holds.

    Because the strict concavity of $F(t,x,u,v,y,z)$ with respect to $y,z$,
    we have
    \begin{equation}
        \begin{aligned}
            F_y(t,x^*_t,u^*_t,v^*_t,y^*_t,z^*_t) & = u^*_t - H_y(t,x^*_t,y^*_t,z^*_t) = 0 \\
            F_z(t,x^*_t,u^*_t,v^*_t,y^*_t,z^*_t) & = v^*_t - H_z(t,x^*_t,y^*_t,z^*_t) = 0.
        \end{aligned}
    \end{equation}
    Similar to the proof of Theorem \ref{thm-main},
    according to the implicit function existence theorem, we can easily check that
    \begin{equation}
            F_x(t,x^*_t,u^*_t,v^*_t,y^*_t,z^*_t) = -H_x(t,x^*_t,y^*_t,z^*_t),
    \end{equation}
    holds,
    and then \eqref{eq-yzF-solution-F} holds.
    Taking the conditional expectation on \eqref{eq-yzF-solution-F},
    we have \eqref{eq-y_F}.
\end{proof}


In Theorem \ref{thm-main} and Proposition \ref{cor-1},
we choose the stochastic optimal control problem \eqref{eq:sys_two_net_linear}-\eqref{eq:cost-two-net},
whose diffusion term $b$ and drift term $\sigma$ of the state equation are simplely $u$ and $v$.
In fact, to simplify the linear terms of $y$ and $z$ in the Hamiltonian $H$, we can also choose other forms of $b$ and $\sigma$,
such as $\alpha_1(x)+\alpha_2(x)u$ and $\beta_1(x)+\beta_2(x)v$,
which are linear with respect to $u$ and $v$.
In this cases,
the transformations \eqref{eq:f_defn_two_linear} and \eqref{eq-H-uv} also hold.
We show an example of this form in subsection \ref{ssec-ex3} for the numerical results.

Besides,
we can still solve the Hamiltonian system \eqref{eq:Ham_sys} even if the coefficients $H_x,H_y,H_z$ do not satisfy the monotonic conditions (Assumption \ref{assu:1} (ii)) in Theorem \ref{thm-main} and Proposition \ref{cor-1}.
For example,
the articles \cite{ma1993solvability,ma2002approximate} studied the solvability of FBSDEs under relatively loose conditions.
In this situation,
as long as the optimal controls reach the optimal values,
the Hamiltonian system \eqref{eq:Ham_sys} can be solved,
however the solution is not necessarily unique.

\section{Numerical method for solving Hamiltonian systems}\label{sec:nemrical_method}

In Section \ref{sec-form}, we present the idea of the stochastic optimal control method to solve the Hamiltonian system \eqref{eq:Ham_sys}. According to Theorem \ref{thm-main}, we only need to find the optimal control triple $(x^*(\cdot),u^*(\cdot),v^*(\cdot))$ of the stochastic control problem \eqref{eq:sys_two_net_linear} and \eqref{eq:cost-two-net}, then the solution $(y^*(\cdot),z^*(\cdot))$ can be obtained by taking the conditional expectation on the backward SDE of \eqref{eq-yzF-solution}. Therefore, effective approximation method should be used to obtain the optimal triple of \eqref{eq:sys_two_net_linear}-\eqref{eq:cost-two-net}, especially for high dimensional cases.

Deep neural networks are usually used to approximate functions defined on finite-dimensional space,
and the approximation relies on the composition of layers with simple functions.
On the basis of the universal approximation theorem~\cite{universial_cybenko_1989,multilayer_Appr_1991},
the neural networks have shown to be an effective tool and gained great successes in many practical applications.
In this paper, inspired by \cite{han2016deep},
we simulate the stochastic optimal control problem \eqref{eq:sys_two_net_linear}-\eqref{eq:cost-two-net} from a direct way with the deep neural network and develop two different numerical algorithms suitable for different cases.

Let $\pi$ be a partition of the time interval,
$ 0 = t_{0}<t_{1}<t_{2}<\cdots<t_{N-1}<t_{N} =T $ of $ [0,T] $.
Define $ \Delta t_{i}=t_{i+1}-t_{i} $ and $ \Delta B_{t_i}=B_{t_{i+1}}-B_{t_{i}} $,
where $ B_{t_{i}} \sim \mathcal{N}(0,t_{i}) $, for $ i = 0, 1, 2,\cdots, N-1 $.
We also denote
\begin{equation*}
    \delta = \sup\limits_{0\leq i\leq N-1}\Delta t_i,
\end{equation*}
which is small enough.
Then the Euler-Maruyama scheme of the state equation \eqref{eq:sys_two_net_linear} can be written as
\begin{equation}\label{eq:discrete_scheme_two_linear}
    \left\{
    \begin{array}{l}
        x_{t_{i+1}}^{\pi} = x_{t_i}^{\pi} + u_{t_i}^{\pi} \Delta t_i +  v_{t_i}^{\pi} \Delta B_{t_i}, \vspace{1ex} \\
        x_0 = a,
    \end{array}
    \right.
\end{equation}
and the corresponding cost functional is given as
\begin{equation}\label{eq:discrete_cost}
    J(u^{\pi}(\cdot),v^{\pi}(\cdot)) = \dfrac{1}{M} \sum_{m=1}^M \Big[ \sum_{i=0}^{N-1} f(t_i, x_{t_i}^{\pi,m}, u_{t_i}^{\pi,m}, v_{t_i}^{\pi,m}) \Delta t_i + \Phi(x_{t_N}^{\pi,m}) \Big],
\end{equation}
where $M$ represents the number of Monte Carlo samples.

We introduce a feedforward neural network $\varphi^{\theta}:[0,T]\times\mathbb{R}^{n}\rightarrow\mathbb{R}^{n}$ of the form
\begin{equation}\label{eq:base_net}
    \varphi^{\theta} = \mathcal{A}_{\ell} \circ \sigma_{\ell -1} \circ \mathcal{A}_{\ell -1} \circ \cdots \circ \sigma_{1} \circ \mathcal{A}_1,
\end{equation}
where
\begin{itemize}
    \item $\ell$ is a positive integer specifying the depth of the neural network,
    \item $\mathcal{A}_1,\cdots,\mathcal{A}_{\ell}$ are functions of the form
    \begin{equation*}
        \begin{aligned}
            \mathcal{A}_1 &= w_1 x + b_1 \in \mathbb{R}^{d_{1}}, \\
            \mathcal{A}_i &= w_i \mathcal{A}_{i-1} + b_i \in \mathbb{R}^{d_{i}}, \qquad \text{for } 2\leq i\leq \ell,
        \end{aligned}
    \end{equation*}
    the matrix weights $w_i$ and bias vector $b_i$ are trainable parameters,
    $\theta$ is the whole trainable parameters $\theta=(w_i,b_i)_{1\leq i\leq\ell}$,
    and $d_i$ is the number of neurons at layer $i$,
    \item $\sigma_{\ell-1},\cdots,\sigma_1$ are the nonlinear activation functions, such as the sigmoid, the rectified linear unit (ReLU), the exponential linear unit (ELU), etc.
\end{itemize}

We approximate the controls $u,v$ with two different neural networks,
which can be represented with \eqref{eq:base_net} and denoted as $\varphi^{\theta^u}_u$ and $\varphi^{\theta^v}_v$, respectively:
\begin{equation}\label{eq:control_net_represent}
\begin{cases}
    u = \varphi^{\theta^u}_u(t,x)=\varphi_u(t,x;\theta^u)=\mathcal{A}_{\ell_u}^u \circ \sigma_{\ell_u -1}^u \circ \mathcal{A}_{\ell_u -1}^u \circ \cdots \circ \sigma_{1}^u \circ \mathcal{A}_1^u(t,x) \vspace{1ex} \\
    v = \varphi^{\theta^v}_v(t,x)=\varphi_v(t,x;\theta^v)=\mathcal{A}_{\ell_v}^v \circ \sigma_{\ell_v -1}^v \circ \mathcal{A}_{\ell_v -1}^v \circ \cdots \circ \sigma_{1}^v \circ \mathcal{A}_1^v(t,x).
\end{cases}
\end{equation}
The two neural networks have the same input dimension but different output dimensions. In this paper, we use the common parameters of the neural networks for all the time points, i.e. a single network is developed for simulating each of the control, and the time point $t$ is
regarded as an input of the neural network.

\subsection{Case 1: the function \texorpdfstring{$f(t,x,u,v)$}{} has an explicit form}

When the function $f(t,x,u,v)$ defined as \eqref{eq:f_defn_two_linear} has an explicit form,
then the discrete cost functional \eqref{eq:discrete_cost} can be approximated directly with
\begin{equation}\label{eq:discrete_cost_theta}
    J(u^{\pi}(\cdot),v^{\pi}(\cdot)) = \dfrac{1}{M} \sum_{m=1}^M \Big[ \sum_{i=0}^{N-1} f(t_i, x_{t_i}^{\pi,m}, u_{t_i}^{\pi,m}, v_{t_i}^{\pi,m}) \Delta t_i + \Phi(x_{t_N}^{\pi,m}) \Big],
\end{equation}
which is also the loss function we need to minimize in the whole neural network, and $u_{t_i}^{\pi}, v_{t_i}^{\pi}$ are the outputs of the two neural networks at time $t_i$. Both the neural networks approximating $u_{t_i}^{\pi}, v_{t_i}^{\pi}$ contain one $(n+1)$-dim input layer,
three $(n+10)$-dim hidden layers.
The network of $u_{t_i}^{\pi}$ has an $n$-dim output layer and that of $v_{t_i}^{\pi}$ has a $n\times d$-dim output layer.
In order to simplify the representation, here we use $\theta$ to represent the training parameters $(\theta^u,\theta^v)$ for both of the neural networks.

To minimize the loss function \eqref{eq:discrete_cost_theta} and learn the optimal parameters,
some basic optimization algorithms, such as stochastic gradient descent (SGD), AdaGrad, RMSProp, and Adam which are already implemented in TensorFlow can be used. In this paper, the Adam method~\cite{adam_2017} is adopted as the optimizer.

Once we obtain the approximations of the optimal controls $u^*$ and $v^*$,
we can calculate the numerical solution $y^*_t$ by taking the conditional expectation on the Backward SDE of \eqref{eq-yzF-solution}, which can be approximated with Monte Carlo simulation:
\begin{equation}\label{eq:Y_formulation_para}
    y_{t_i}^{\pi} = \dfrac{1}{M} \sum\limits_{m=1}^M \Big[ \sum\limits_{j=i}^{N-1} -f_x(t_j, x_{t_j}^{\pi,m}, u_{t_j}^{\pi,m}, v_{t_j}^{\pi,m}) \Delta t_j - \Phi_x(x_{t_N}^{\pi,m}) \Big].
\end{equation}

We show the whole network architecture in Figure \ref{fig:Network_Algo1}, where $h^u$ and $h^v$ represent respectively the hidden layers of the neural networks $\varphi^{\theta^u}_u$ and $\varphi^{\theta^v}_v$. For each of the neural network, the common parameters is used for all the time points, and the time $t$ is taken as one of the inputs of the neural network.

\begin{figure}[htbp]
  \centering
  \includegraphics[scale=0.6]{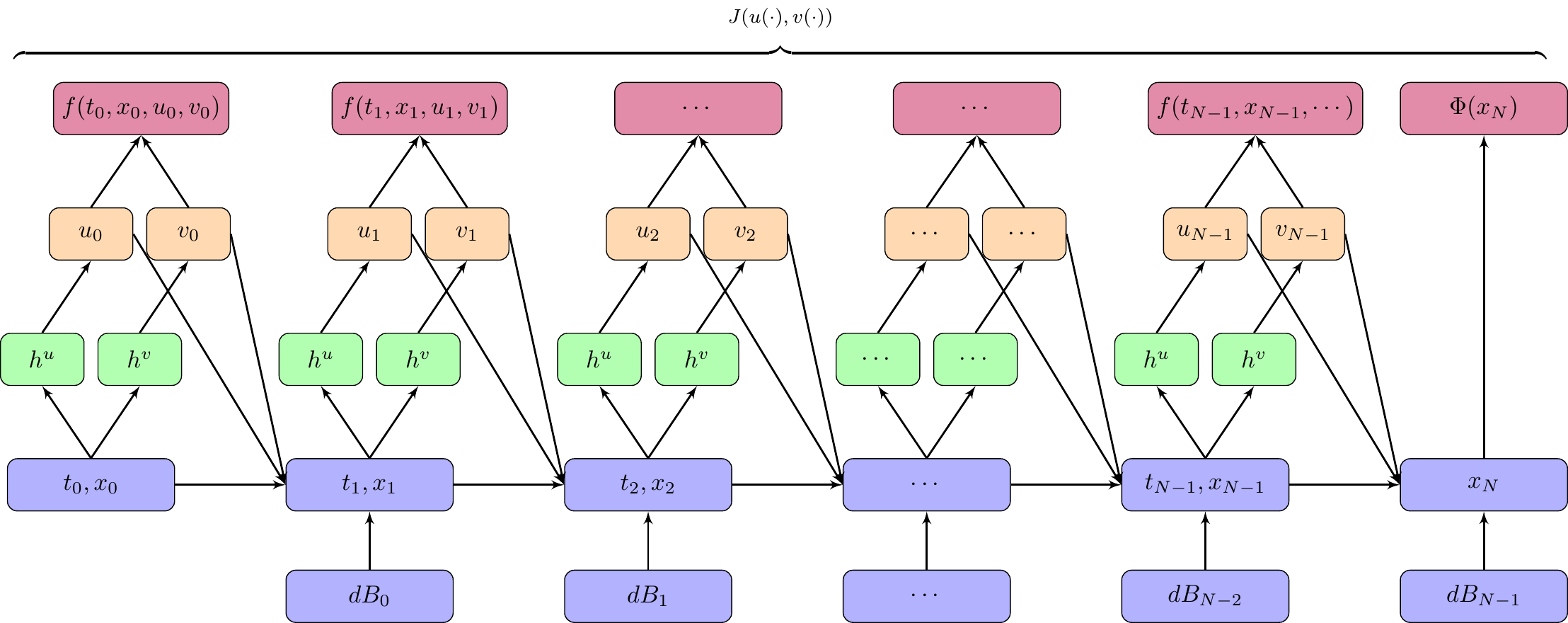}
  \caption{The whole network architecture for case 1: $f(t,x,u,v)$ has an explicit representation. The boxes in purple, green and orange represent respectively the input layer, the hidden layers and the output layers of the neural networks $\varphi^{\theta^u}_u$ and $\varphi^{\theta^v}_v$. The data flow of the neural networks is represented with black arrows.}
  \label{fig:Network_Algo1}
\end{figure}

The pseudo-code is shown in Algorithm \ref{alg:main}.
\begin{algorithm}[H]
  \renewcommand{\thealgorithm}{1}
  \caption{Numerical algorithm for solving the Hamiltonian system of case 1}
  \label{alg:main}
  \begin{algorithmic}[1]
    \Require The Brownian motion $ \Delta B_{t_i} $, initial state $ a $, and time $ t_i $;
    \Ensure The output controls $u_{t_i}^{\pi}$, $v_{t_i}^{\pi}$ and $y_{t_i}^{\pi}$.
    \For { $ l = 0 $ to $ maxstep $}
    \State $ x_{0}^{l,\pi,m} = a $, $ loss = 0 $;
    \For { $ i = 0 $ to $ N-1 $}
    \State $u^{l,\pi,m}_{t_i} = \varphi_u(t_i, x^{l,\pi,m}_{t_i};\theta^l);$
    \State $v^{l,\pi,m}_{t_i} = \varphi_v(t_i, x^{l,\pi,m}_{t_i};\theta^l);$
    \State $x^{l,\pi,m}_{t_{i+1}} = x^{l,\pi,m}_{t_i} + u^{l,\pi,m}_{t_i}\Delta t_i + v^{l,\pi,m}_{t_i} \Delta B_{t_i};$
    \State $loss = loss + f(t_i, x_{t_i}^{l,\pi,m}, u_{t_i}^{l,\pi,m}, v_{t_i}^{l,\pi,m}) \Delta t_i;$
    \EndFor
    \State $loss = \dfrac{1}{M} \sum\limits_{m=1}^M \left[loss + \Phi(x_{t_N}^{l,\pi,m});\right] $
    \State $ \theta^{l+1}= Adam(\theta^{l}, \nabla loss); $
    \EndFor
    \State $y_{t_i}^{\pi} = \dfrac{1}{M} \sum\limits_{m=1}^M \Big[ \sum\limits_{j=i}^{N-1} -f_x(t_j, x_{t_j}^{l,\pi,m}, u_{t_j}^{l,\pi,m}, v_{t_j}^{l,\pi,m}) \Delta t_j - \Phi_x(x_{t_N}^{l,\pi,m}) \Big]. $
  \end{algorithmic}
\end{algorithm}

\subsection{Case 2: the function \texorpdfstring{$f(t,x,u,v)$}{} does not have an explicit form}

For the cases where the function $f$ does not have an explicit form, we can still solve the Hamiltonian system \eqref{eq:Ham_sys} by constructing a different neural network architecture.
For any given optimal triple $(x^*(\cdot),u^*(\cdot),v^*(\cdot))$ of the optimal control problem \eqref{eq:cost-two-net},
we assume that the solution $(y^*(\cdot),z^*(\cdot))$ of \eqref{eq-yzF-solution-F} satisfy
\begin{equation}
    y^*_t = Y(t,x^*_t), \qquad z^*_t = Z(t,x^*_t), \qquad \forall t\in[0,T],
\end{equation}
for some functions $Y$ and $Z$.
Then according to Proposition \ref{cor-1}, we have
\begin{equation}\label{eq-F-sys}
    J(u^*(\cdot), v^*(\cdot)) = \mathbb{E} \displaystyle \left[\int_0^T F(t,x^*_t,u^*_t,v^*_t,y^*_t,z^*_t) dt + \Phi(x_T^*) \right],
\end{equation}
and
\begin{equation}
    \mathrm{d} x_t^* = u^*_t \mathrm{d} t + v_t^*\mathrm{d} B_t, \quad x_0^* = a,
\end{equation}
where $y^*_t,z^*_t$ satisfy
\begin{equation}\label{eq-F-max}
    F(t,x^*_t,u^*_t,v^*_t,y^*_t,z^*_t) = \max\limits_{y,z} F(t,x^*_t,u^*_t,v^*_t,y,z),
\end{equation}
and $F$ is defined by \eqref{eq-F-def}.
Because of the strict concavity and differentiable properties of $F$ with respect to $y,z$,
the constraint condition \eqref{eq-F-max} can be rewritten as
\begin{equation}\label{eq-Fy-zero}
    \begin{cases}
        F_y(t,x^*_t,u^*_t,v^*_t,y^*_t,z^*_t) = 0, \vspace{1ex} \\
        F_z(t,x^*_t,u^*_t,v^*_t,y^*_t,z^*_t) = 0.
    \end{cases}
\end{equation}
In this way,
the Hamiltonian system \eqref{eq:Ham_sys} can be solved by solving the stochastic optimal control
\begin{equation}\label{eq-Fyz-sys}
    J(u(\cdot), v(\cdot)) = \mathbb{E} \displaystyle \left[\int_0^T F(t,x_t,u_t,v_t,y_t,z_t) \mathrm{d}t + \Phi(x_T) \right],
\end{equation}
with the state constraint
\begin{equation}\label{eq-Fyz-state}
\begin{cases}
    \mathrm{d} x_t = u_t \mathrm{d} t + v_t\mathrm{d} B_t, \quad x_0 = a, \vspace{1ex} \\
    F_y(t,x_t,u_t,v_t,y_t,z_t) = 0, \vspace{1ex} \\
    F_z(t,x_t,u_t,v_t,y_t,z_t) = 0.
\end{cases}
\end{equation}

Now we focus on solving \eqref{eq-Fyz-sys}-\eqref{eq-Fyz-state} with a new neural network architecture. Firstly, the Euler-Maruyama scheme \eqref{eq:discrete_scheme_two_linear} is used to obtain the discrete form of the optimal control problem. In addition to the neural networks for simulating the controls $u$ and $v$, we need to construct two more neural networks for simulating the functions $Y$ and $Z$,
\begin{equation}\label{eq-YZ-con}
    \begin{cases}
    y = \varphi^{\theta^y}_y(t,x)=\varphi_y(t,x;\theta^y)=\mathcal{A}_{\ell_y}^y \circ \sigma_{\ell_y -1}^y \circ \mathcal{A}_{\ell_y -1}^y \circ \cdots \circ \sigma_{1}^y \circ \mathcal{A}_1^y(t,x) \vspace{1ex} \\
    z = \varphi^{\theta^z}_z(t,x)=\varphi_z(t,x;\theta^z)=\mathcal{A}_{\ell_z}^z \circ \sigma_{\ell_z -1}^z \circ \mathcal{A}_{\ell_z -1}^z \circ \cdots \circ \sigma_{1}^z \circ \mathcal{A}_1^z(t,x).
    \end{cases}
\end{equation}
We also use the common parameters on all the time points for each of the four neural networks, and the inputs of each neural network are $(t,x)$.
All of the four neural networks contain one $(n+1)$-dim input layer and three $(n+10)$-dim hidden layers. The dimensions of the output layer for each neural network are different, that of $y$ and $u$ are $n$-dim, and that of $z$ and $v$ are $(n\times d)$-dim. We still adopt Adam as the optimizer.

We denote $\theta = (\theta_{uv}, \theta_{yz})$ as all the parameters of the neural networks,
where $\theta_{uv}$ are the parameters of the neural networks $\varphi_u$ and $\varphi_v$ (for simulating $u$ and $v$),
and $\theta_{yz}$ are the parameters of the neural networks $\varphi_y$ and $\varphi_z$ (for simulating $y$ and $z$).

Then the cost functional of the control problem is approximated by
\begin{equation}\label{eq-loss-1}
    J(u^{\pi}(\cdot),v^{\pi}(\cdot)) = \dfrac{1}{M} \sum_{m=1}^M \Big[ \sum_{i=0}^{N-1} F(t_i, x_{t_i}^{\pi,m}, u_{t_i}^{\pi,m}, v_{t_i}^{\pi,m}, y_{t_i}^{\pi,m}, z_{t_i}^{\pi,m}) \Delta t_i + \Phi(x_{t_N}^{\pi,m}) \Big],
\end{equation}
which is the first loss function we need to minimize,
and $u_{t_i}^{\pi}, v_{t_i}^{\pi}, y_{t_i}^{\pi}, z_{t_i}^{\pi}$ are the outputs of the whole neural networks at time $t_i$.
In addition, in order to guarantee that the conditions \eqref{eq-Fy-zero} hold,
we introduce the other cost functional
\begin{equation}\label{eq-loss-2}
\begin{aligned}
    J(y^{\pi}(\cdot),z^{\pi}(\cdot)) &:= \dfrac{1}{M} \sum_{m=1}^M \sum_{i=0}^{N-1} \Big[ | F_y(t_i, x_{t_i}^{\pi,m}, u_{t_i}^{\pi,m}, v_{t_i}^{\pi,m}, y_{t_i}^{\pi,m}, z_{t_i}^{\pi,m}) |^2  \\
    & \qquad \qquad + | F_z(t_i, x_{t_i}^{\pi,m}, u_{t_i}^{\pi,m}, v_{t_i}^{\pi,m}, y_{t_i}^{\pi,m}, z_{t_i}^{\pi,m}) |^2 \Big].
\end{aligned}
\end{equation}
which is at the same time the second loss function we need to minimize in the neural networks.

The update of the neural network parameters is carried out as follows. Suppose that we have finished the update at the iteration step $l$ and obtain the parameters $\theta^l = (\theta_{uv}^l, \theta_{yz}^l)$.
We first calculate the values $(x^{\pi}_{t_i},u^{\pi}_{t_i},v^{\pi}_{t_i},y^{\pi}_{t_i},z^{\pi}_{t_i})$ with the parameters $\theta^l$.
Then the parameters $\theta_{uv}^l$ are updated to $\theta_{uv}^{l+1}$ by using one step Adam optimization with the first loss function \eqref{eq-loss-1}.
In the following,
the parameters $(\theta_{uv}^{l+1}, \theta_{yz}^l)$ are used to calculate the values $(x^{\pi}_{t_i},u^{\pi}_{t_i},v^{\pi}_{t_i},y^{\pi}_{t_i},z^{\pi}_{t_i})$ in \eqref{eq-loss-2}.
Then the parameters $\theta_{yz}^l$ are updated with the second loss function \eqref{eq-loss-2} by Adam optimization.
In each iteration step, the update of parameters $\theta_{yz}^l$ can be performed multiple times,
for example $\kappa$ times,
to ensure the loss function \eqref{eq-loss-2} is enough small.
And after $\kappa$ times update,
the parameters  of the neural networks are denoted as $\theta_{yz}^{l+1}$. Finally the solution $y$ is obtained by taking the conditional expectation of the backward SDE of \eqref{eq-yzF-solution-F} which can be calculated with Monte Carlo simulation:

\begin{equation}\label{eq:Y_F}
    y_{t_i}^{\pi} = \dfrac{1}{M} \sum\limits_{m=1}^M \Big[ \sum\limits_{j=i}^{N-1} -F_x(t_j, x_{t_j}^{\pi,m}, u_{t_j}^{\pi,m}, v_{t_j}^{\pi,m}, y_{t_j}^{\pi,m}, z_{t_j}^{\pi,m}) \Delta t_j - \Phi_x(x_{t_N}^{\pi,m}) \Big].
\end{equation}
 The pseudo-code is given in Algorithm \ref{alg:double}.

In fact,we can also deal with the maximum condition \eqref{eq-F-max} directly. in this situation, we need to maximize the second objective functional in addition to \eqref{eq-loss-1},
\begin{equation}\label{eq-J-F}
    J(y(\cdot),z(\cdot)) = \mathbb{E} \left[\int_0^T F(t,x_t,u_t,v_t,y_t,z_t) \right] \mathrm{d}t.
\end{equation}
and a similar scheme can be given.
The advantage for using the conditions \eqref{eq-Fy-zero} instead of \eqref{eq-J-F} is that we can determine the influence of the constraint conditions by the value of the cost functional \eqref{eq-loss-2}, as the optimal value of \eqref{eq-loss-2} should be 0.

\section{Numerical results}\label{sec:num_results}

In this section,
we show the numerical results for solving the Hamiltonian system with our proposed algorithms.
If not specifically mentioned,
we use 6-layer fully connected neural networks for the approximation in these examples, the number of time divisions is set to be $N=25$,  and we mainly use a piecewise constant decay learning rate which decreases from $3\times 10^{-3}$ to $1\times10^{-3}$ with the increase of the number of iteration steps.
We adopt ELU as the activation function. In order to show the performance of the proposed algorithms,
we compare the results among the two proposed algorithms and the Deep FBSDE method (briefly noted as DFBSDE in the figures and tables of this section) which was developed as Algorithm 1 in our previous work \cite{Peng_FBSDE_numerical}.
But different from \cite{Peng_FBSDE_numerical}, for better comparison with the novel proposed algorithms, here we use the ELU activation function and remove the batch normalization layer in the Deep FBSDE method. For each algorithm of the examples, we perform ten independent runs to show more accurate results.

\begin{algorithm}[H] 
  \renewcommand{\thealgorithm}{2}
  \caption{Numerical algorithm for solving the Hamiltonian system of case 2}
  \label{alg:double}
  \begin{algorithmic}[1]
    \Require The Brownian motion $ \Delta B_{t_i} $, initial state $ a $, and time $ t_i $;
    \Ensure The processes $ (x^{l,\pi}_{t_i},u^{l,\pi}_{t_i},v^{l,\pi}_{t_i},y^{l,\pi}_{t_i},z^{l,\pi}_{t_i}) $.
    \For { $ l = 0 $ to $ maxstep $}
    \For {$k=0$ to $\kappa + 1$}
    \State $ x_{0}^{l,\pi,m} = a $, $loss_1 = 0$, $loss_2 = 0$;
    \For { $ i = 0 $ to $ N-1 $}
    \State $u^{l,\pi,m}_{t_i} = \varphi_u(t_i, x^{l,\pi,m}_{t_i};\theta_{uv}^l);$
    \State $v^{l,\pi,m}_{t_i} = \varphi_v(t_i, x^{l,\pi,m}_{t_i};\theta_{uv}^l);$
    \State $y^{l,\pi,m}_{t_i} = \varphi_y(t_i, x^{l,\pi,m}_{t_i};\theta_{yz}^l);$
    \State $z^{l,\pi,m}_{t_i} = \varphi_z(t_i, x^{l,\pi,m}_{t_i};\theta_{yz}^l);$
    \State $x^{l,\pi,m}_{t_{i+1}} = x^{l,\pi,m}_{t_i} + u^{l,\pi,m}_{t_i}\Delta t_i + v^{l,\pi,m}_{t_i} \Delta B_{t_i};$
    \State $loss_1 = loss_1 + F(t_i, x_{t_i}^{l,\pi,m}, u_{t_i}^{l,\pi,m}, v_{t_i}^{l,\pi,m}, y_{t_i}^{l,\pi,m}, z_{t_i}^{l,\pi,m}) \Delta t_i;$
    \State $loss_2 = loss_2 + \dfrac{1}{M} \sum\limits_{m=1}^M \left[|F_y(t_i, x_{t_i}^{l,\pi,m}, u_{t_i}^{l,\pi,m}, v_{t_i}^{l,\pi,m}, y_{t_i}^{l,\pi,m}, z_{t_i}^{l,\pi,m})|^2\right];$
    \State $loss_2 = loss_2 + \dfrac{1}{M} \sum\limits_{m=1}^M \left[|F_z(t_i, x_{t_i}^{l,\pi,m}, u_{t_i}^{l,\pi,m}, v_{t_i}^{l,\pi,m}, y_{t_i}^{l,\pi,m}, z_{t_i}^{l,\pi,m})|^2\right];$
    \EndFor
    \State $loss_1 = \dfrac{1}{M} \sum\limits_{m=1}^M \left[loss_1 + \Phi(x_{t_N}^{l,\pi,m})\right]; $
    \If {$k=0$}
    \State $ \theta_{uv}^{l} = Adam(\theta_{uv}^{l}, \nabla loss_1); $
    \State $ \theta^{l} = (\theta_{uv}^{l}, \theta_{yz}^{l}); $
    \Else
    \State $ \theta_{yz}^{l} = Adam(\theta_{yz}^{l}, \nabla loss_2); $
    \State $ \theta^{l} = (\theta_{uv}^{l}, \theta_{yz}^{l}); $
    \EndIf
    \EndFor
    \State $ \theta^{l+1} = (\theta_{uv}^{l}, \theta_{yz}^{l}); $
    \EndFor
    \State $y_{t_i}^{\pi} = \dfrac{1}{M} \sum\limits_{m=1}^M \Big[ \sum\limits_{j=i}^{N-1} -F_x(t_j, x_{t_j}^{l,\pi,m}, u_{t_j}^{l,\pi,m}, v_{t_j}^{l,\pi,m},y_{t_j}^{l,\pi,m}, z_{t_j}^{l,\pi,m}) \Delta t_j - \Phi_x(x_{t_N}^{l,\pi,m}) \Big]. $
  \end{algorithmic}
\end{algorithm}

\subsection{Example 1: a linear Hamiltonian system}

Firstly,
we consider the following linear quadratic Hamiltonian
\begin{equation}\label{Ex2H}
    H(t,x,y,z) = \langle x, y \rangle + \dfrac{1}{4} \langle y, y \rangle + \langle z, z \rangle, \qquad \Phi(x) = \langle \dfrac{1}{2}Qx, x \rangle,
\end{equation}
where $(x,y,z)\in\mathbb{R}^{n+n+n}$ and $Q$ is a given matrix valued in $\mathbb{R}^{n\times n}$,
and the corresponding Hamiltonian system is given as
\begin{equation}\label{Ex2Hsys}
    \left\{
    \begin{array}{l}
        \mathrm{d} x_t = (x_t + \dfrac{1}{2} y_t) \mathrm{d} t + 2z_t \mathrm{d} B_t, \vspace{1ex} \\
        -\mathrm{d}y_t = y_t \mathrm{d} t - z_t \mathrm{d} B_t, \vspace{1ex} \\
        x_0 = a, \qquad y_T = -\Phi_x(x_T)= -Qx_T,
    \end{array}
    \right.
\end{equation}
which is a linear FBSDE and $B$ is a $1$-dimensional standard Brownian motion.
It can be easily check that this linear FBSDE has a unique solution \cite{hu1995solution,wu1999Fully,zhang2015}.

As is known, the linear FBSDE connects with a Riccati equation.
Suppose the solution of FBSDE \eqref{Ex2Hsys} is in the following form:
\begin{align*}
  y_t = -K_tx_t, \qquad z_t = -M_tx_t.
\end{align*}
Combing it with \eqref{Ex2Hsys},
we then obtain a Riccati equation
\begin{equation}\label{Ex2Riccati}
  \begin{cases}
    \dot{K}_t-\frac{1}{2}K_t^2+2K_t=0,\\
    M_t=0, \qquad K_T = Q,
  \end{cases}
\end{equation}
where $ K_t $ is a matrix function and $ \dot{K}_t $ is the derivative of $ K_t $ with respect to $ t $.
And the solution of  \eqref{Ex2Riccati} can be approximated with the ODE45 method in MATLAB (ODE45 in short),
which solves determined ordinary differential equations with the four-order Runge-Kutta method.
In order to show the performance of our novel proposed algorithms,
we take the numerical solution of the Ricatti equation \eqref{Ex2Riccati} with ODE45 as a benchmark.

Now we consider the corresponding optimal control problem,
\begin{equation*}
    \left\{
    \begin{array}{l}
        \mathrm{d} x_t = u_t \mathrm{d} t + v_t \mathrm{d} B_t, \vspace{1ex}\\
        x_0 = a,
    \end{array}
    \right.
\end{equation*}
with cost functional
\begin{equation*}
    J(u(\cdot),v(\cdot)) = \mathbb{E} \Big \{ \int_0^T f(t, x_t, u_t, v_t) + \Phi(x_T) \Big\},
\end{equation*}
where
\begin{align*}
    f(t,x,u,v) &= \max_{y,z} F(t,x,u,v,y,z) \vspace{1ex} \\
    &= |u-x|^2 + \dfrac{1}{4}|v|^2,
\end{align*}
and
\begin{equation*}
    F(t,x,u,v,y,z) = \langle y, u \rangle + \langle z, v \rangle - \langle x, y \rangle - \dfrac{1}{4} \langle y, y \rangle - \langle z, z \rangle.
\end{equation*}

In this example,
we set $a=(1.0,\cdots,1.0)\in\mathbb{R}^{n}, T=0.1$ and give the form of $Q$ as
\begin{equation*}
    \begin{bmatrix}
        1 & \lambda & \lambda & \cdots & \lambda \\
        \lambda & 1 & \lambda & \cdots & \lambda \\
        \lambda & \lambda & 1 & \cdots & \lambda \\
        \vdots & \vdots & \vdots & \ddots & \vdots \\
        \lambda & \lambda & \lambda & \cdots & 1
    \end{bmatrix}
\end{equation*}
where $\lambda$ is a given constant between 0 and 1.
For example,
if $\lambda=0.0$,
$Q=E_n$ is a $ n $-order unit matrix,
The numerical result of \eqref{Ex2Riccati} with ODE45 can be solved with $K_0=1.1573E_n$ for $n=100$,
then the value of $y_0$ can be obtained by
\begin{equation*}
    y_0 = -K_0x_0 = -1.1573a.
\end{equation*}
which is taken as the benchmark results in this example.

Even though the function $f$ can be solved explicitly in this example,
we calculate the numerical results through both the two proposed algorithms and regard that $f$ does not have an explicit form in Algorithm \ref{alg:double}.
The comparison results on the approximated solution $y_0$ between our proposed stochastic control methods (Algorithm \ref{alg:main} and \ref{alg:double}) and the Deep FBSDE method  are shown in \autoref{tab-ex1},
and the solution with ODE45 is regarded as the benchmark.
Note that when the initial state $x=a$ , the solution of $y_0$ is a vector and all its elements are equal,
thus we show the value of the first element of $y_0$  in \autoref{tab-ex1}.
Moreover, we show the relative errors between our approximated solution of $y_0$ and that of ODE45,
and compute the variances among ten independent runs of the approximated solution $y_0$.
We also change the parameter $\lambda$, and study the corresponding approximation results.

\begin{sidewaystable}[htbp]
  \centering
  \renewcommand\arraystretch{1.5}
  \caption{The implementations of different terminal $Q$ with $n=100$}
  \begin{tabular}{|c|c|c|c|c|c|c|c|c|c|c|}
    \hline
    \multirow{2}{*}{} & \multirow{2}{*}{Riccati} & \multicolumn{3}{|c|}{DEEP FBSDE}&  \multicolumn{3}{|c|}{Alg 1}&  \multicolumn{3}{|c|}{Alg 2} \\
    \cline{3-11} & & Mean &Rela. Error & Var.& Mean &Rela. Error &Var.& Mean &Rela. Error &Var.\\
    \hline
    $\lambda=0.0$ & -1.1573 & -1.15733 & 2.907e-05 & 6.209e-08 & -1.15751 & 1.797e-04 & 3.141e-07 & -1.15651 & 6.839e-04 & 3.065e-06 \\
    \hline
    $\lambda=0.2$ & -11.8093 & -11.6314 & 1.506e-02 & 2.392e-01 & -11.8113 & 1.733e-04 & 2.441e-03 & -11.8222 & 1.095e-03 & 5.997e-03 \\
    \hline
    $\lambda=0.4$ & -15.2711 & -14.1265 & 7.495e-02 & 9.101e-01 & -15.3120 & 2.681e-03 & 1.739e-03 & -15.2264 & 2.930e-03 & 1.622e-01 \\
    \hline
    $\lambda=0.6$ & -16.9860 & \textbf{-10.4284} & 3.861e-01 & 2.753e+01 & -17.0461 & 3.859e-03 & 3.563e-03 & -17.0516 & 3.860e-03 & 4.633e-02 \\
    \hline
    $\lambda=0.8$ & -18.0101 & \textbf{-9.2087} & 4.887e-01 & 2.279e+01 & -17.9844 & 1.426e-03 & 2.528e-02 & -18.1494 & 7.733e-03 & 2.726e-02 \\
    \hline
    $\lambda=1.0$ & -18.6920 & \textbf{-9.9503} & 4.677e-01 & 2.300e+01 & -18.7162 & 1.297e-03 & 2.000e-02 & -18.8837 & 1.026e-02 & 8.245e-02 \\
    \hline
  \end{tabular}
  \label{tab-ex1}
\end{sidewaystable}

From \autoref{tab-ex1},
we can see that the novel proposed algorithms show much more stable performance than the Deep FBSDE method.
For different terminals with different parameters $\lambda$,
the novel proposed algorithms demonstrate more stable relative errors and variances.
The Deep FBSDE method perform well when the terminal is a unit matrix ($\lambda=0.0$),
but when we change the terminal to other forms ($\lambda\not=0.0$),
the results of the Deep FBSDE method diverge. As we know, the learning rate is one of the important factors affecting the approximation results. When we choose smaller learning rate,
the Deep FBSDE method can converge, but it need much more iteration steps than our proposed algorithms. As an example for the case $\lambda\not=0.0$ with smaller learning rate, we show the approximation results in the following Figure \ref{figEx2-2} for $\lambda=0.8$.

In Figure \ref{figEx2}, we show the curves and the variances of the approximated results $y_0$ with different iteration steps for $\lambda=0.0$ and $\lambda=0.8$ respectively,
and the black lines represent the results with ODE45 which is taken as the benchmark. The upper two figures in Figure \ref{figEx2} exhibit the results for $\lambda=0.0$.
From the upper left figure,
we can see that when the number of iteration steps is close to 10000,
the approximated solution $y_0$ with Algorithm \ref{alg:main}, \ref{alg:double} and the Deep FBSDE method are all very close to the results with ODE45.
Moreover, that of Algorithm \ref{alg:main} and \ref{alg:double} have smaller variation scopes among ten independent runs and converge within less iteration steps than that of the Deep FBSDE method.
The upper right figure shows that when the number of iteration steps tends to be 10000,
the variance curve of $y_0$ with Algorithm \ref{alg:main} and \ref{alg:double} are also close to that of the Deep FBSDE method.
The lower two figures in Figure \ref{figEx2} exhibit the results for $\lambda=0.8$.
We can see that when the number of iteration steps tends to be 10000, the approximation results of Algorithm \ref{alg:main} and \ref{alg:double} are close to the benchmark. However, that of the Deep FBSDE method is far from the benchmark,
and the variation scope and variance increase with the increase of iteration steps.

\begin{figure}[H]
    \centering
    \includegraphics[scale=0.45]{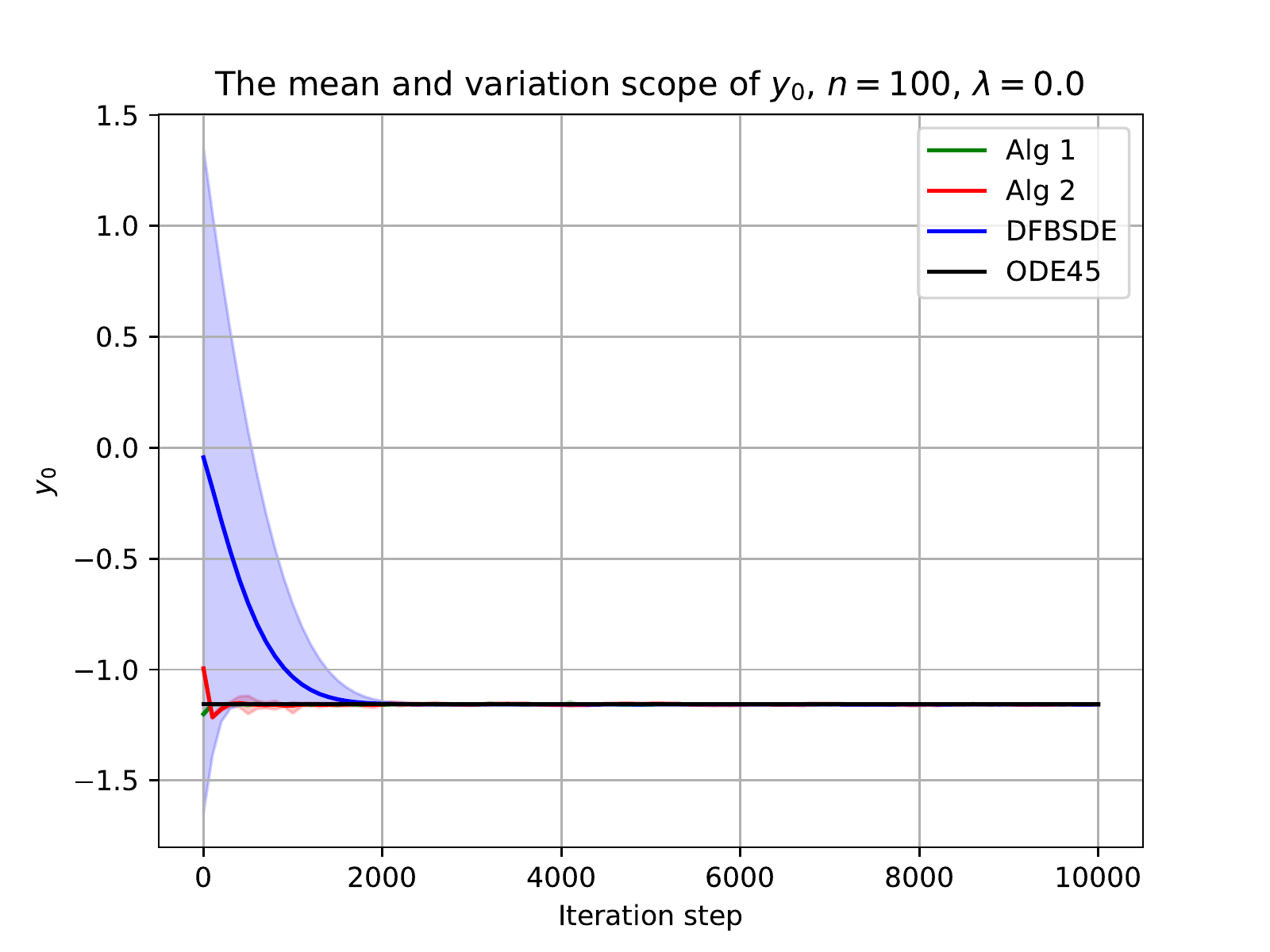}
    \includegraphics[scale=0.45]{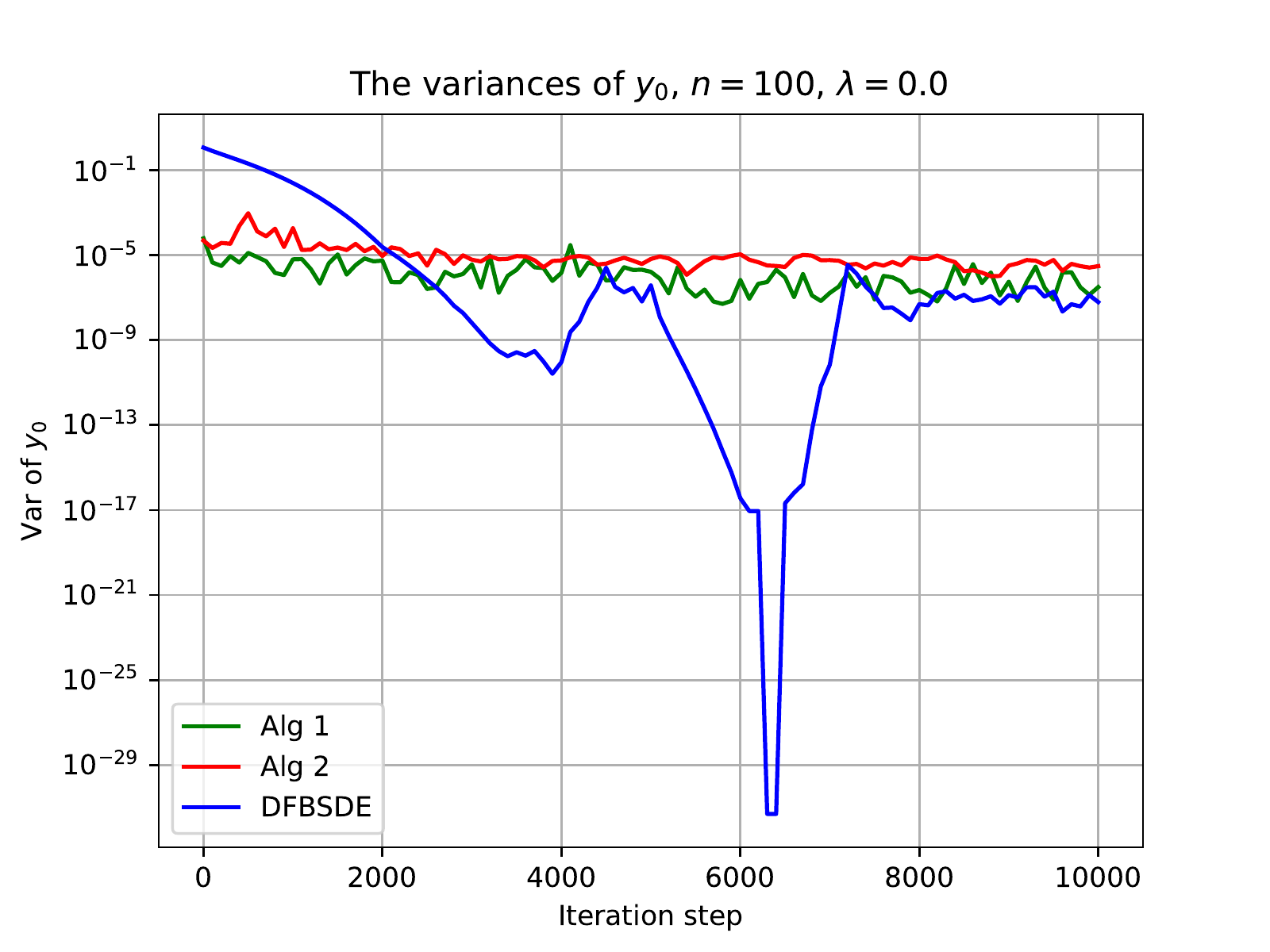}
    \includegraphics[scale=0.45]{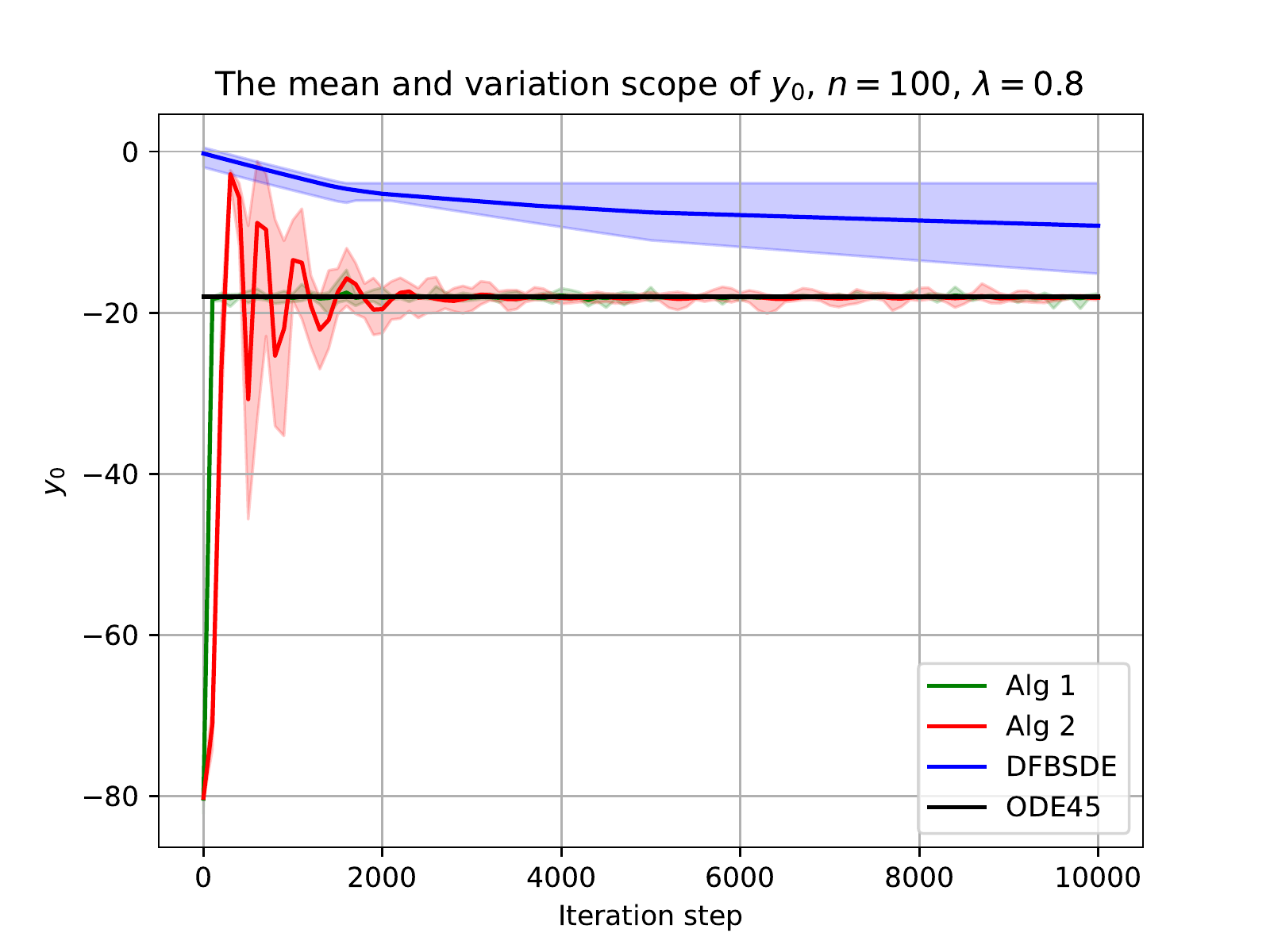}
    \includegraphics[scale=0.45]{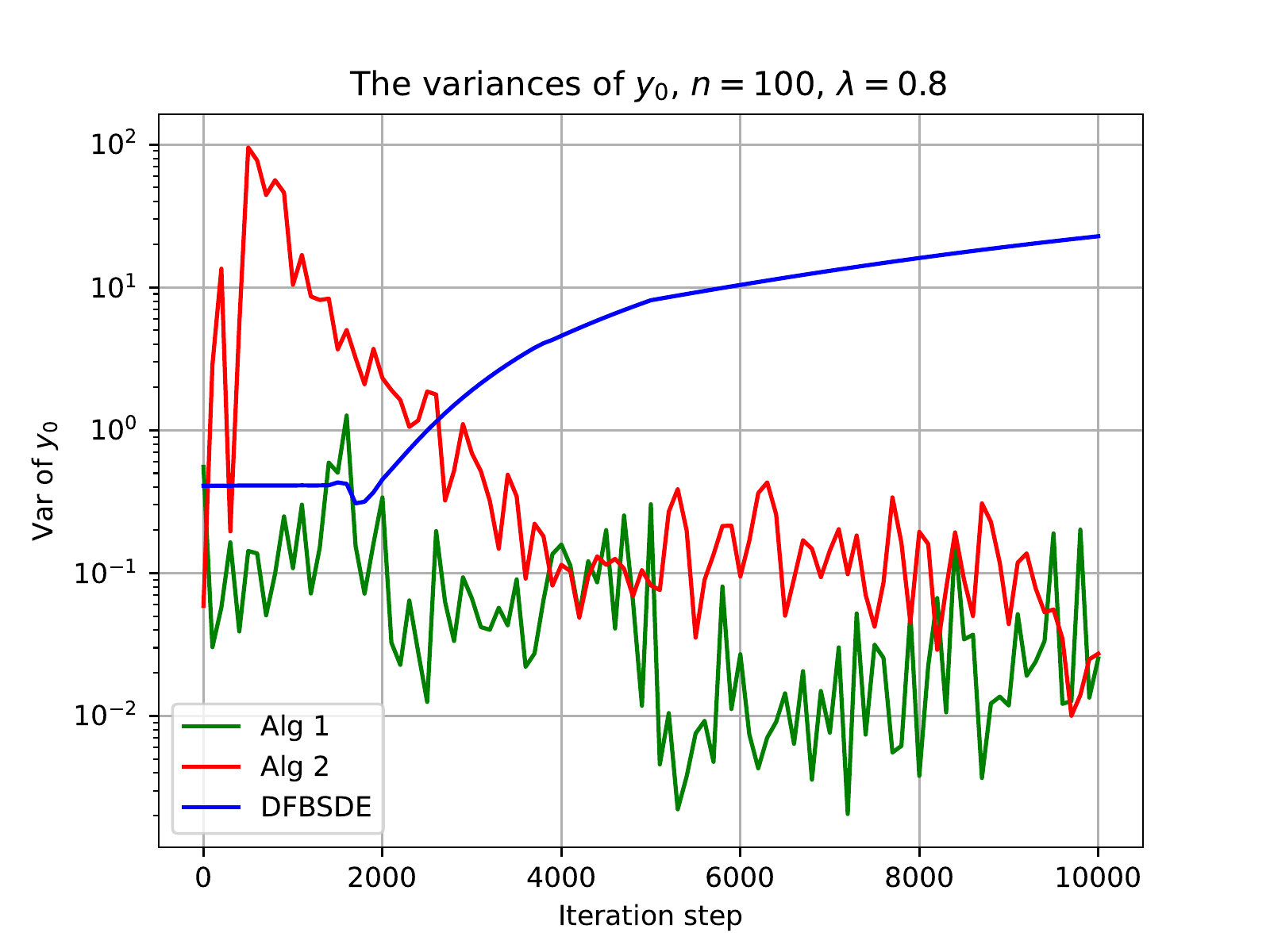}
    \caption{Approximation results with a piecewise decay learning rate from $3\times 10^{-3}$ to $1\times10^{-3}$ for $\lambda=0.0$ in the upper figures and $\lambda=0.8$ in the lower figures. The left figures show the mean and variation scopes of the approximated solution $y_0$ among 10 independent runs, and the right figures exhibit the variance curves of $y_0$ among 10 independent runs. The black lines in the left figure represent the results with ODE45 which are taken as the benchmarks. We can see that our novel proposed algorithms(Algorithms 1 and 2) have more stable convergence and converge within less iteration steps.
    }
    \label{figEx2}
\end{figure}

\begin{figure}[H]
    \centering
    \includegraphics[scale=0.45]{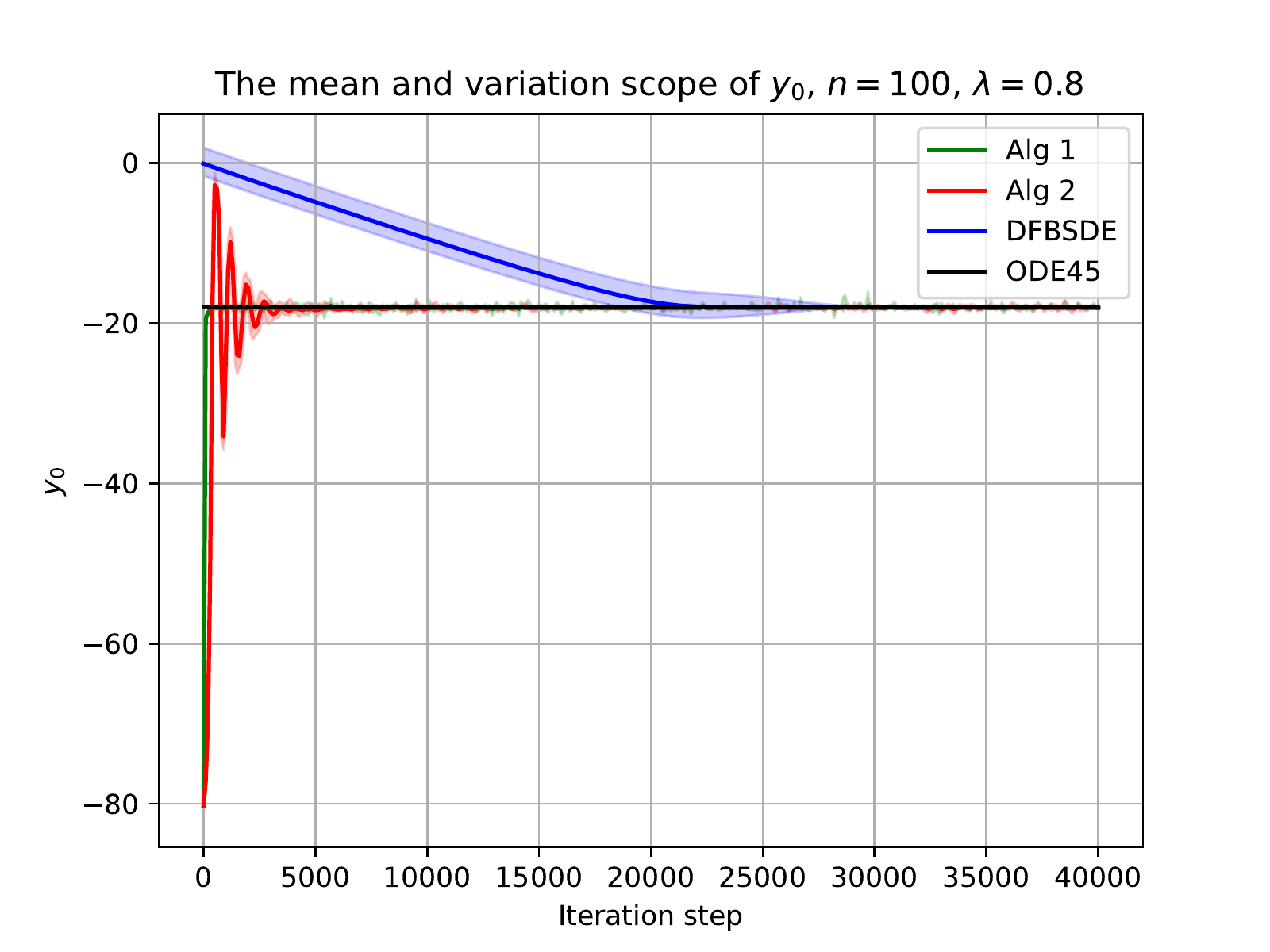}
    \includegraphics[scale=0.45]{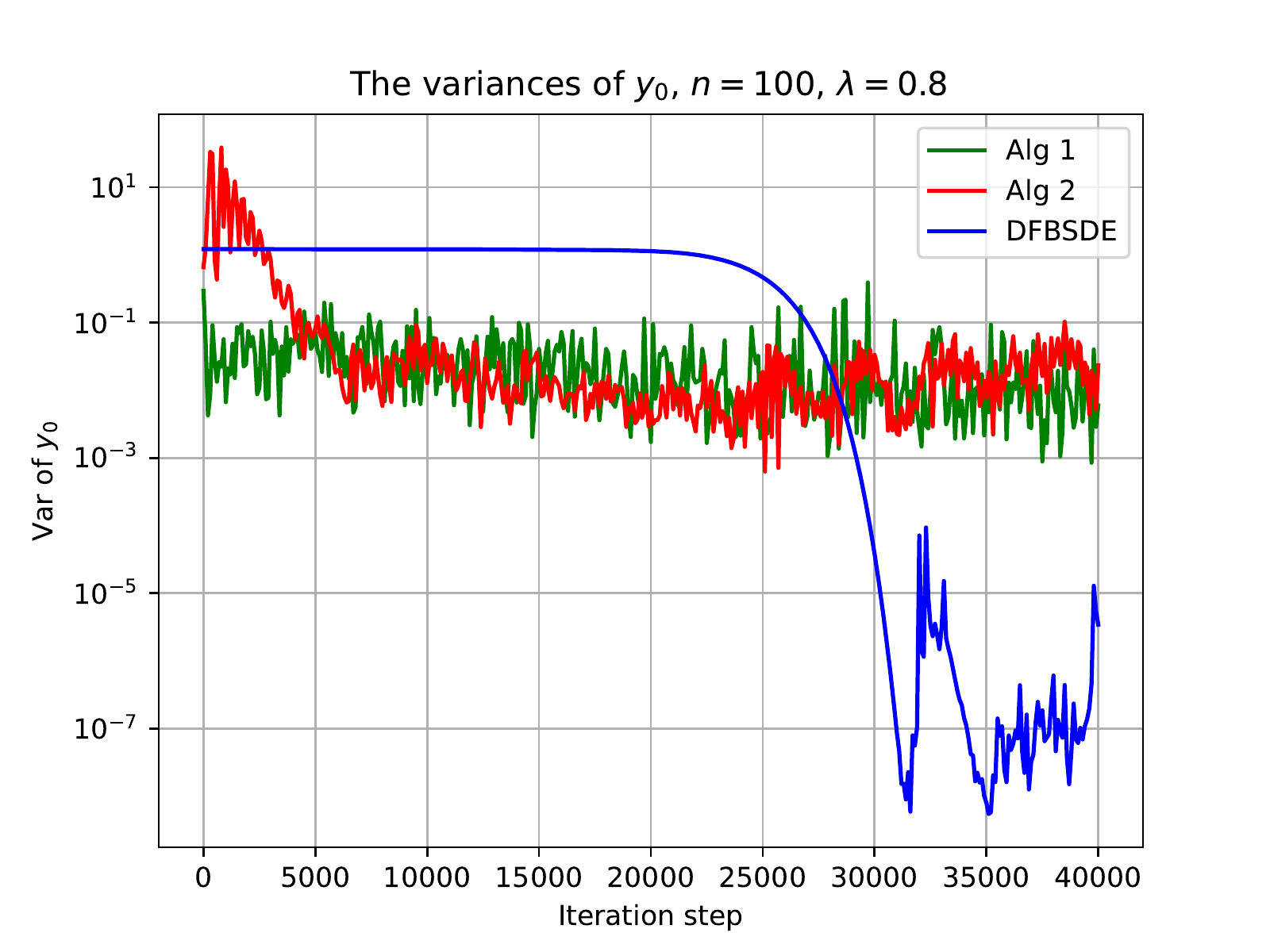}
    \caption{Approximation results with a constant learning rate of $1\times 10^{-3}$. We can see from the left figure that comparing with the novel proposed algorithms(Algorithms 1 and 2), the Deep FBSDE method needs more iteration steps to achieve stable convergence, and it has a much smaller variance at the end of the training from the right figure.
    }
    \label{figEx2-2}
\end{figure}

\subsection{Example 2: a nonlinear Hamiltonian system}\label{ssec-ex3}

Given the Hamiltonian $H$ as
\begin{equation}\label{Ex3H}
    H(t,x,y,z) = \dfrac{1}{2} \langle y, y\circ\cos ^2 x \rangle + \dfrac{1}{2} \langle z, z\circ\sin ^2 x \rangle + \langle y, \cos x \rangle + \langle z, \sin x \rangle - \dfrac{1}{2} \langle x,x \rangle
\end{equation}
and
\begin{equation}\label{Ex3Phi}
    \Phi(x) = \dfrac{1}{2}\langle x, x \rangle,
\end{equation}
where $(x,y,z)\in\mathbb{R}^{n+n+n}$.
Here
$\circ$ represents the Hadamard product,
\[
    x \circ y = (x_1y_1, x_2y_2, \cdots, x_ny_n) \in \mathbb{R}^n, \qquad \forall x, y\in\mathbb{R}^n
\]


The corresponding Hamiltonian system is
\begin{equation}\label{Ex3Hsys}
\left\{
\begin{array}{l}
\mathrm{d}x_t = \cos x_t\circ (y_t\circ\cos x_t +1) \mathrm{d}t + \sin x_t\circ(z_t \circ\sin x_t +1) \circ \mathrm{d} B_t, \vspace{1ex}\\
-\mathrm{d}y_t = \Big[ -y_t\circ\sin x_t\circ (y_t\circ \cos x_t+1) + z_t\circ \cos x_t \circ(z_t\circ \sin x_t+1) - x_t \Big] \mathrm{d} t \\
\qquad \qquad - z_t\circ \mathrm{d} B_t, \vspace{1ex}\\
x_0 = a, \qquad y_T=-\Phi_x(x_T),
\end{array}
\right.
\end{equation}
where $B$ is a $n$-dimensional Brownian motion.

Here we introduce a stochastic optimal control problem which is different from \eqref{eq:sys_two_net_linear}:
\begin{equation}\label{Ex3StateX}
    \left\{
    \begin{array}{l}
        \mathrm{d} x_t = \cos x_t\circ(u_t+1) \mathrm{d} t + \sin x_t\circ(v_t+1)\circ \mathrm{d} B_t \vspace{1ex} \\
        x_0 = a,
    \end{array}
    \right.
\end{equation}
with the cost functional
\begin{equation*}
J(u(\cdot),v(\cdot))=\mathbb{E} \left\lbrace \int_{0}^{T} f(t, x_t, u_t, v_t) dt + \Phi(x_T) \right\rbrace,
\end{equation*}
where $f(t, x, u, v)$ is given as
\begin{align*}
    f(t, x, u, v) & = \max_{y,z} F(t,x,u,v,y,z) \vspace{1ex} \\
    & = \max_{y,z} \left\{ \langle y, \cos x\circ(u+1)\rangle + \langle z,\sin x\circ(v+1)\rangle - H(t,x,y,z) \right\}.
\end{align*}
Then $f(t, x, u, v)$ can be solved as
\begin{equation*}
    f(t, x, u, v) = \dfrac{1}{2} \big[ \langle x,x \rangle + \langle u,u \rangle + \langle v,v \rangle \big],
\end{equation*}
and
\begin{equation*}
    \begin{aligned}
        y &= u \circ \dfrac{1}{\cos x}, \\
        z &= v \circ \dfrac{1}{\sin x}.
    \end{aligned}
\end{equation*}

Define $h(t,x,y,v,y,z)$ as
\begin{equation*}
\begin{aligned}
    h(t,x,u,v,y,z) = \langle y, \cos x\circ(u+1) \rangle + \langle z, \sin x\circ(v+1) \rangle - f(t,x,u,v),
\end{aligned}
\end{equation*}
then we get the value of $y_0$ by

\begin{align*}
    y_0 &= \mathbb{E} \left\lbrace \int_0^{T} h_x(t,x_t,u_t,v_t,y_t,z_t)dt - \Phi_x(x_T) \right\rbrace\\
    & = \mathbb{E} \left\lbrace \int_{0}^{T} (-u_t\circ \tan x_t\circ(u_t +1) + v_t \circ\cot x_t\circ(v_t+1) - x_t) - \Phi_x(x_T) \right\rbrace.
\end{align*}

We set $a=(1.0,\cdots,1.0)\in\mathbb{R}^{n}$ and $T=0.1$.
The comparison results among Algorithm \ref{alg:main}, Algorithm \ref{alg:double} and the Deep FBSDE method are shown in Figure~\ref{figEx3}.
We can see from the left figure that when the number of iteration steps is 5000, The approximated value of $Y_0$ with  Algorithm \ref{alg:main} and the Deep FBSDE method are very close, which are $-1.0835$ and $-1.0834$, respectively.
And the approximated solution with Algorithm \ref{alg:double} is $-1.1208$, which is slightly smaller than that with Algorithm \ref{alg:main} and the Deep FBSDE method. Similar to the first example, the variation scope of $y_0$ for Algorithm \ref{alg:main}, \ref{alg:double} are much smaller than that of the Deep FBSDE method. Besides, at the end of the training, the variances of $y_0$ for Algorithm \ref{alg:main}, \ref{alg:double} and the Deep FBSDE method are very close.

\begin{figure}[H]
    \centering
    \includegraphics[scale=0.45]{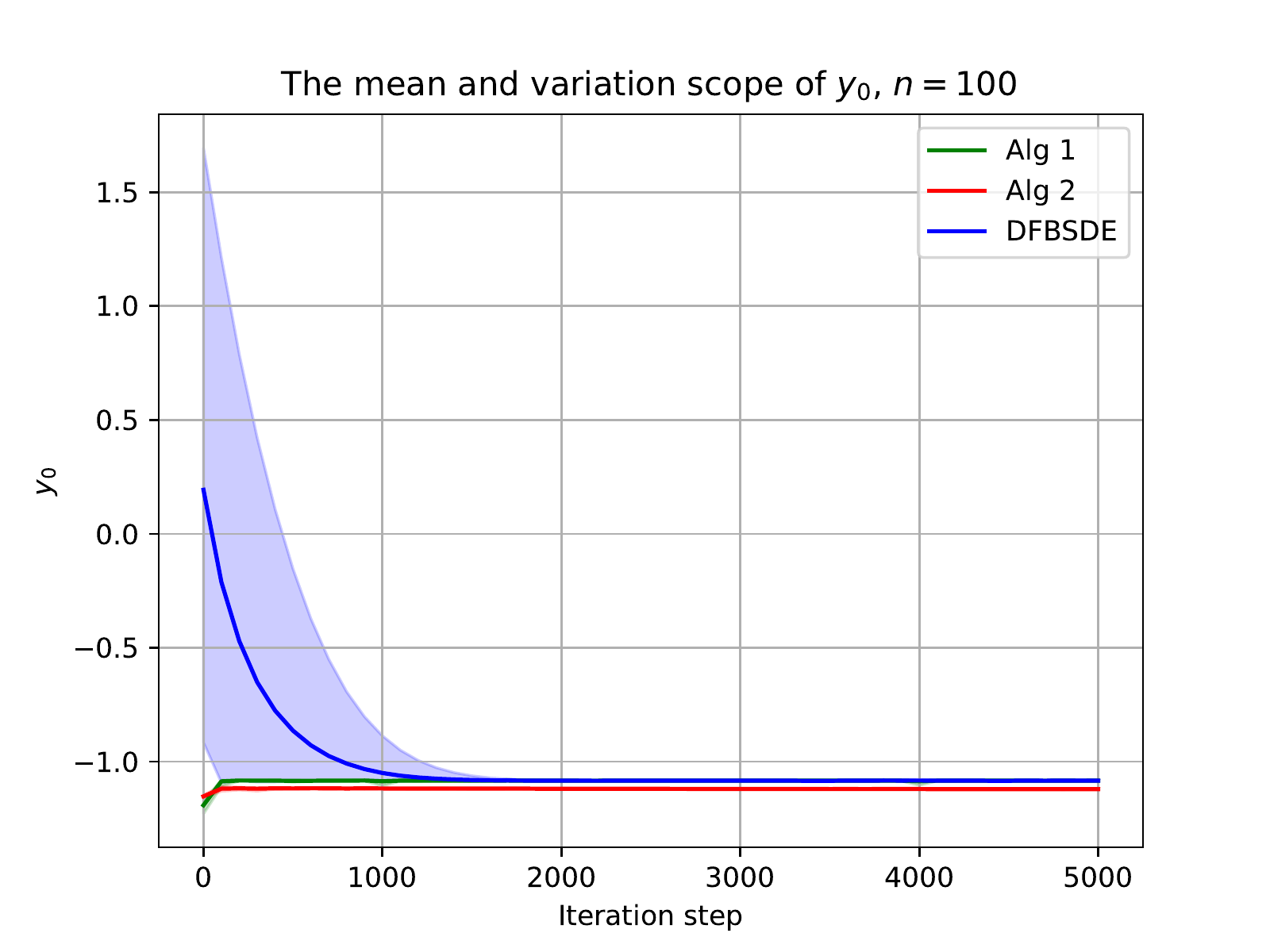}
    \includegraphics[scale=0.45]{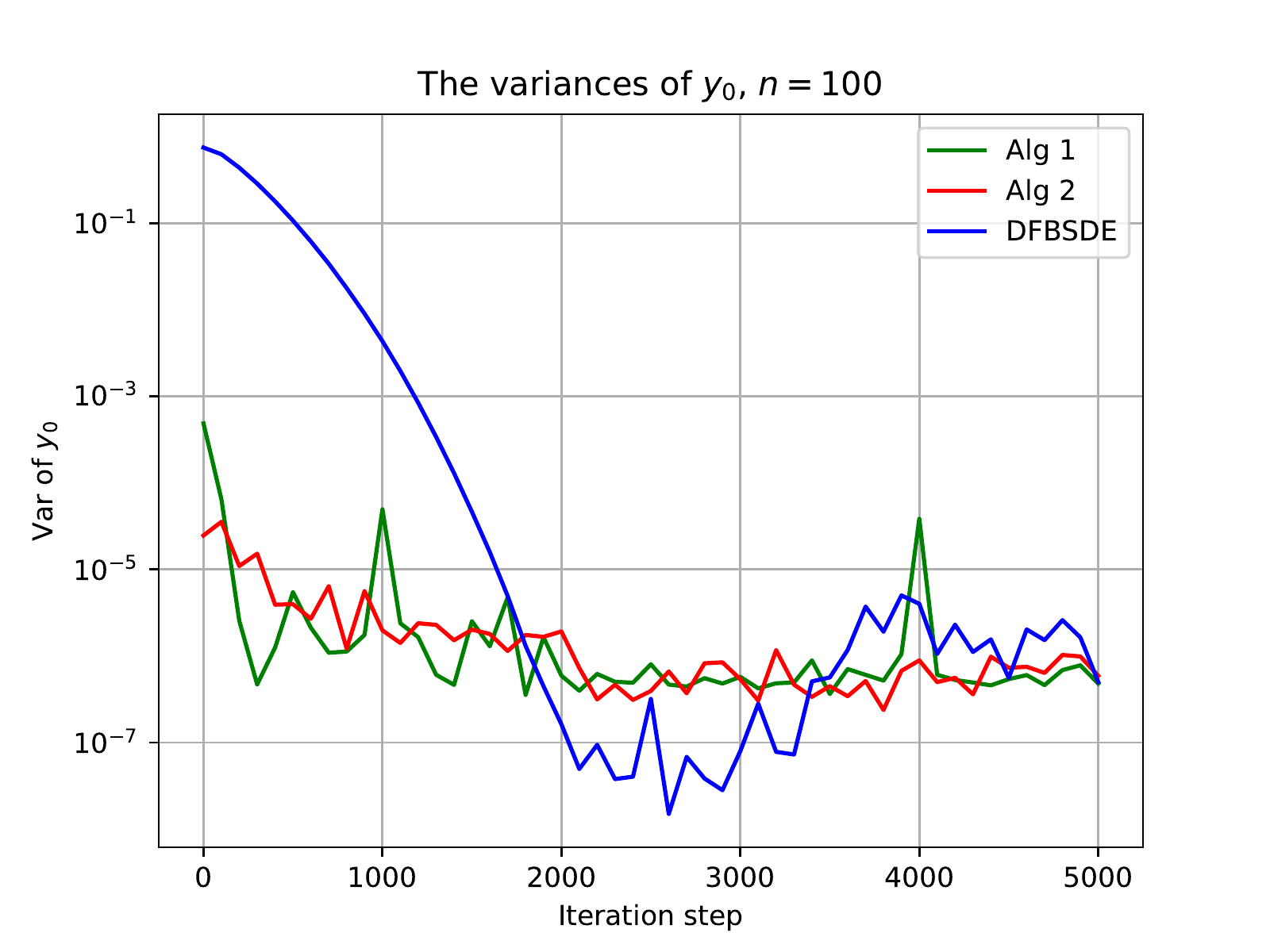}
    \caption{We can see from the figure that  when the number of iteration steps tends to be 5000, the approximated values of $y_0$ for all the three methods are close. However, the Deep FBSDE method shows larger variation scopes than Algorithm \ref{alg:main} and \ref{alg:double} during the training. And the variances of these three methods are also close  at the end of the training.}
    \label{figEx3}
\end{figure}

\subsection{Example 3: a Hamiltonian system with exponents in the drift term}\label{ssec-ex4}

In the third example, we solve a Hamiltonian system with exponents in the drift term. Consider the following Hamiltonian
\begin{equation}\label{eq-Hamil-exp}
    H(t,x,y,z) = \log \left(\sum_{i=1}^n\exp(y_i)\right) + \dfrac{1}{2} \langle y, y \rangle + \langle z, z \rangle + \langle z, x \rangle + \dfrac{1}{5} \langle x, x \rangle,
\end{equation}
where $(x,y,z)\in\mathbb{R}^{n+n+n}$ and $y = (y_1,\cdots,y_n)$.
The terminal function is  given as $\Phi(x)=\dfrac{1}{2}\langle x, x \rangle$.
Then the Hamiltonian system we need to solve is given as following,
\begin{equation}\label{Ex4Hsys}
\left\{
\begin{array}{l}
\mathrm{d}x_t = \left[\exp(y_t) \left(\sum_{i=1}^n\exp(y_{it})\right) ^{-1} + y_t \right] \mathrm{d}t + (x_t + 2z_t) \circ \mathrm{d} B_t, \vspace{1ex}\\
-\mathrm{d}y_t = (\dfrac{2}{5} x_t + z_t) \mathrm{d} t - z_t \circ \mathrm{d} B_t, \vspace{1ex}\\
x_0 = a, \qquad y_T=-\Phi_x(x_T),
\end{array}
\right.
\end{equation}
where $\exp(y_t) = (\exp(y_{it}),\cdots,\exp(y_{nt}))$ and $B$ is a $n$-dimensional Brownian motion.

The corresponding stochastic optimal control problem is
\begin{equation*}\label{Ex4Con}
    \begin{cases}
    \mathrm{d} x_t = u_t\mathrm{d}t + v_t\circ\mathrm{d}B_t, \vspace{1ex} \\
    x_0 = a,
    \end{cases}
\end{equation*}
with the cost functional
\begin{equation*}
\begin{aligned}
    J(u(\cdot),v(\cdot))=\mathbb{E} \left\lbrace \int_{0}^{T} f(t, x_t, u_t, v_t) \mathrm{d} t + \Phi(x_T) \right\rbrace,
\end{aligned}
\end{equation*}
where
\begin{equation}\label{Ex4f}
\begin{aligned}
    f(t,x,u,v) &= \max_{y,z} F(t,x,u,v,y,z), \\
    &= \max_{y,z} \left\{\langle y, u\rangle + \langle z,v\rangle - H(t,x,y,z) \right\}.
\end{aligned}
\end{equation}
Different from the previous two examples,
in this example, the function $f$ defined as \eqref{Ex4f} does not have an explicit representation. In this situation, Algorithm \ref{alg:main} is not applicable, thus we mainly make the comparison between the results of Algorithm \ref{alg:double} and the Deep FBSDE method.

We set $T=0.2$, $a=(0.5,\cdots,0.5)\in\mathbb{R}^{n}$ and $n=100$ in this example.
The comparison results between Algorithm \ref{alg:double} and the Deep FBSDE method are shown in Figure \ref{figEx4}. We can see that at the end of the training, the approximated solution $y_0$ of the two methods are close, and both the variances are small enough. Similar with the previous two examples,  Algorithm \ref{alg:double} shows smaller variation scope and converges within less iteration steps than the Deep FBSDE method.

\begin{figure}[H]
    \centering
    \includegraphics[scale=0.45]{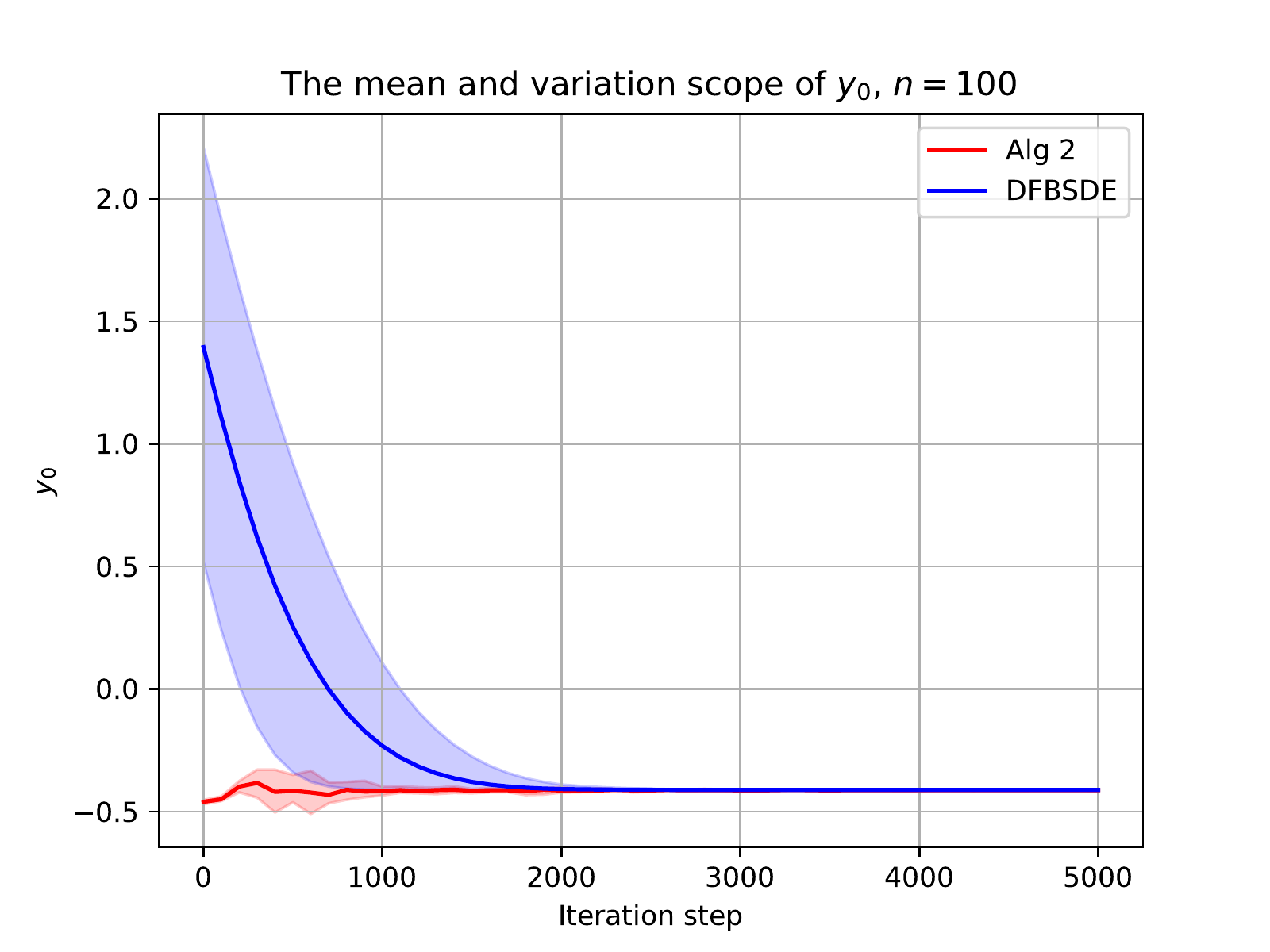}
    \includegraphics[scale=0.45]{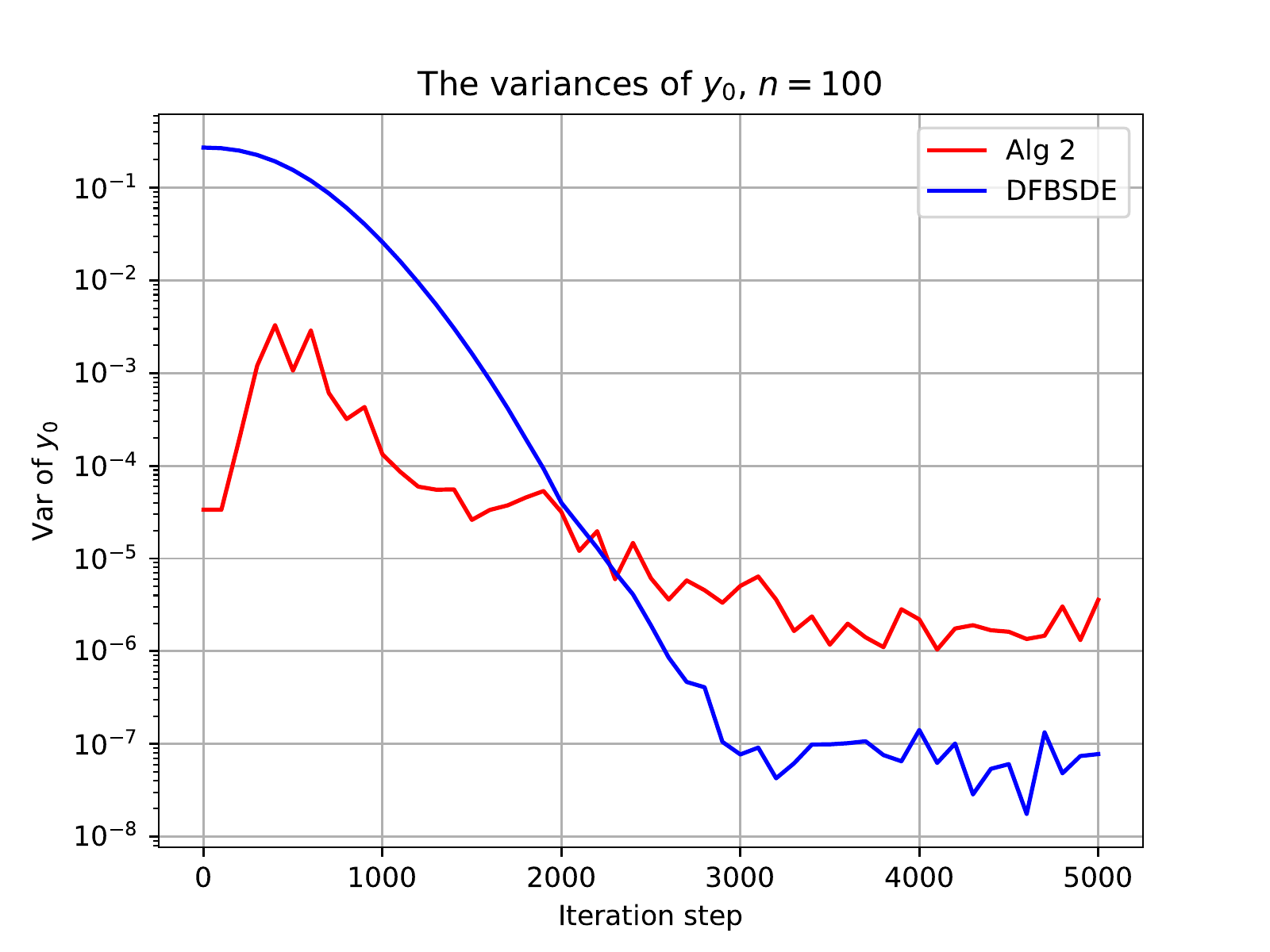}
    \caption{We can see from the above figure that at the end of training, the approximated solutions of $y_0$ with both Algorithm \ref{alg:double}  and the Deep FBSDE method are very close, and the mean values of $y_0$ among 10 independent runs are $-0.41297$ for Algorithm \ref{alg:double} and $-0.41211$ for the Deep FBSDE method.
    The variance of Algorithm \ref{alg:double} is slightly larger than that of the Deep FBSDE method,
    but it converges within less iteration steps.}
    \label{figEx4}
\end{figure}

In the implementations of Algorithm \ref{alg:double} for all the three examples, we minimize the norm of the derivatives of the function $F$ with respect to $y$ and $z$ according to \eqref{eq-loss-2} so that they are equal to $0$. As an alternative, we can also maximize the cost functional defined as \eqref{eq-J-F} in the implementations.

\section{Conclusion}\label{sec-con}

In this paper,
different from the general way of solving the FBSDEs,
we propose a novel method which solve the Hamiltonian system from the view of the stochastic optimal control via deep learning. Two different algorithms suitable for different cases are developed. From the numerical results, the novel proposed Algorithms \ref{alg:main} and \ref{alg:double} demonstrate faster convergence rate  and more stable performance than the Deep FBSDE method. In some cases, Algorithm \ref{alg:main} and \ref{alg:double} shows higher accuracy.

\bibliographystyle{ieeetr}
\bibliography{ref}

\end{document}